\def\NZQ{\mathbb}               % the font for N,Z,Q,R,C
\def\NN{{\NZQ N}}
\def\QQ{{\NZQ Q}}
\def\ZZ{{\NZQ Z}}
\def\RR{{\NZQ R}}
\def\FF{{\NZQ F}}
\def\frk{\mathfrak}               % font for "Fraktur"
\def\mm{{\frk m}}
\def\nn{{\frk n}}
\def\Phi{{\frk N}}
\def\opn#1#2{\def#1{\operatorname{#2}}} % to make operators
\opn\chara{char} \opn\length{\ell} \opn\pd{pd} \opn\rk{rk}
\opn\projdim{proj\,dim} \opn\injdim{inj\,dim} \opn\rank{rank}
\opn\depth{depth} \opn\grade{grade} \opn\height{height}
\opn\embdim{emb\,dim} \opn\codim{codim}
\opn\Tr{Tr} \opn\bigrank{big\,rank}
\opn\superheight{superheight}\opn\lcm{lcm}
\opn\trdeg{tr\,deg}%\emph{
\opn\reg{reg} \opn\lreg{lreg} \opn\ini{in} \opn\lpd{lpd}
\opn\size{size}\opn{\mult}{mult}
\opn\div{div} \opn\Div{Div} \opn\cl{cl} \opn\Cl{Cl}
\opn\Spec{Spec} \opn\Supp{Supp} \opn\supp{supp} \opn\Sing{Sing}
\opn\Ass{Ass} \opn\Min{Min}
\opn\Ann{Ann} \opn\Rad{Rad} \opn\Soc{Soc}
\opn\Syz{Syz} \opn\Im{Im} \opn\Ker{Ker} \opn\Coker{Coker}
\opn\Am{Am} \opn\Hom{Hom} \opn\Tor{Tor} \opn\Ext{Ext}
\opn\End{End} \opn\Aut{Aut} \opn\id{id}
\opn\nat{nat}
\opn\pff{pf}%   \pf exists already
\opn\Pf{Pf} \opn\GL{GL} \opn\SL{SL} \opn\mod{mod} \opn\ord{ord}
\opn\Gin{Gin}
\opn\Hilb{Hilb}\opn\adeg{adeg}\opn\std{std}\opn\ip{infpt}
\opn\Pol{Pol}
\opn\sat{sat}
\opn\Var{Var}
\opn\Gen{Gen}
\opn\aff{aff} \opn\con{conv} \opn\relint{relint} \opn\st{st}
\opn\lk{lk} \opn\cn{cn} \opn\core{core} \opn\vol{vol}
\opn\link{link} \opn\star{star}
\opn\gr{gr}
\def\pot#1#2{#1[\kern-0.28ex[#2]\kern-0.28ex]}
\opn\dirlim{\underrightarrow{\lim}}
\opn\inivlim{\underleftarrow{\lim}}
\let\union=\cup
\let\sect=\cap
\let\dirsum=\oplus
\let\iso=\cong
\let\Union=\bigcup
\let\Dirsum=\bigoplus
\let\to=\rightarrow
\let\To=\longrightarrow
\def\Implies{\ifmmode\Longrightarrow \else
        \unskip${}\Longrightarrow{}$\ignorespaces\fi}
\def\implies{\ifmmode\Rightarrow \else
        \unskip${}\Rightarrow{}$\ignorespaces\fi}
\def\iff{\ifmmode\Longleftrightarrow \else
        \unskip${}\Longleftrightarrow{}$\ignorespaces\fi}
\newtheorem{Theorem}{Theorem}[section]
\newtheorem{Lemma}[Theorem]{Lemma}
\newtheorem{Corollary}[Theorem]{Corollary}
\newtheorem{Proposition}[Theorem]{Proposition}
\newtheorem{Example}[Theorem]{Example}
\let\epsilon\varepsilon
\let\phi=\varphi
\let\kappa=\varkappa
\def\qed{\ifhmode\textqed\fi
      \ifmmode\ifinner\quad\qedsymbol\else\dispqed\fi\fi}
\def\textqed{\unskip\nobreak\penalty50
       \hskip2em\hbox{}\nobreak\hfil\qedsymbol
       \parfillskip=0pt \finalhyphendemerits=0}
\def\dispqed{\rlap{\qquad\qedsymbol}}
\opn\dis{dis}
\def\pnt{{\raise0.5mm\hbox{\large\bf.}}}
\opn\Lex{Lex}
\begin{document}

\title{Finite generation of algebras associated to powers of ideals}

\author{Steven Dale Cutkosky, J\"urgen Herzog  and Hema Srinivasan}
\thanks{The first author was partially supported by NSF}
\subjclass{Primary: 13A30 Secondary: 13D45}

\address{Steven Dale Cutkosky, Mathematics Department,
202 Mathematical Sciences Bldg,
University of Missouri,
Columbia, MO 65211 USA
}\email{dale@math.missouri.edu}

\address{J\"urgen Herzog, Fachbereich Mathematik, Universit\"at Duisburg-Essen, Campus Essen, 45117
Essen, Germany} \email{juergen.herzog@uni-essen.de}

\address{Hema Srinivasan, Mathematics Department,
202 Mathematical Sciences Bldg,
University of Missouri,
Columbia, MO 65211 USA
}\email{srinivasan@math.missouri.edu <srinivasan@math.missouri.edu>}

\begin{abstract}
We study generalized symbolic powers and form ideals of powers and compare their growth with the growth of  ordinary powers, and we discuss the question when the graded rings attached to symbolic powers or to form ideals of powers are finitely generated.
\end{abstract}

\maketitle
\section*{Introduction}

Our starting motivation for this paper is a result of Hoang and Trung in \cite{HT} where they showed that the Hilbert coefficients of the powers $I^k$ of a graded ideal $I$ in the polynomial ring $S=K[x_1,\ldots, x_n]$ are polynomial functions in $k$ for $k\gg 0$. In an explicit form this statement is given in \cite{HPV}. In the same  paper the question is raised whether for any ideal  in a Noetherian local ring $(R,\mm, K)$ a similar statement is true. We do not have any counterexample yet.  On the other hand,  a positive answer is unlikely by the following reason: denote by $G(R/I)$ the associated graded  ring of $R/I$ with respect to the maximal ideal $\mm$. Note  that $G(R/I)=G(R)/I^*$ where $I^*\subset G(R)$ is the graded ideal in $G(R)$ generated by all elements $f^*$ with $f\in I$, where $f^*$ is the leading form of $f$, defined as follows: let $d=\sup\{j\: f\in \mm^j\}$, then $f^*=f+\mm^{d+1}$. With this notation introduced we see that Hilbert function $H(R/I^k,j)$ of $R/I$ is given by
$H(R/I^k,j) =\sum_{i=0}^j\dim_K(G(R)/(I^k)^*)_i$.  In the graded case $(I^k)^*=I^k$, so that in this case  the algebra $A(I)=\Dirsum_{k\geq 0}(I^k)^*$ is finitely generated over $R$, indeed is equal to the Rees algebra of $I$ and hence is standard graded over $R$. This fact is substantially used in the proof of Hoang and Trung. Unfortunately, in general $A(I)$ is  not finitely generated, even if $I$ is generated by quasi-homogeneous polynomials.  We give such an example in \ref{infinite}. There we show that for the ideal $I=(x^2, y^3-xy)\subset K[[x,y]]$ the algebra $A(I)$ is not finitely generated. Notice that $I$ is quasi-homogeneous if we set $\deg x=2$ and $\deg y=1$. This example is also remarkable, since it is an $\mm$-primary  complete intersection.  We also show in Example~\ref{depending} that the finite generation of $A(I)$ may depend on the characteristic of the residue class field.
General criteria for the finite generation of $A(I)$ seem to be not available. However if $I$ is an ideal in the power series ring $R=K[[x_1,\ldots,x_n]]$ the following strategy can be applied: as explained in Lemma~\ref{phi} and Proposition~\ref{comparison} there is attached to $I$ in natural way an ideal $I^{\sharp}\subset R[[s]]$ with the property that $A(I)$ is finitely generated if and only if $\Dirsum_{k\geq 0}(I^{\sharp})^k\: s^\infty$ is finitely generated.

In Section 1 we study more generally algebras of the form $S_J(I)=\Dirsum_{k\geq 0}^\infty I^k\: J^\infty$ where $I$ and $J$ are ideals in a Noetherian local ring or graded ideals in a standard graded $K$-algebra. This type of algebras have been intensively studied in algebraic as well as in combinatorial contexts. If we choose $J=\mm$, then $I^k\: \mm^\infty$ is equal to the saturated power $\widetilde{I^k}$ and $S_\mm(I)$ is the saturated power algebra. In general the saturated power algebra is not finitely generated. Finite generation of $S_\mm(I)$ implies that the regularity of the saturated powers $\widetilde{I^k}$ of $I$ are quasi-linear functions of $k$ for large $k$, see \cite{CHT} and \cite{Ko}.  However  it is shown by examples in \cite{CHT}, \cite{C} and \cite{CEL}  that the  regularity of saturated powers may behave   extremely strangely. In particular in these examples $S_\mm(I)$ cannot be finitely generated.
 Another special case of interest is obtained when we choose for $J$ the intersection of all asymptotic prime ideals of $I$ which are not minimal. In this  case $S_J(I)$ is the symbolic Rees algebra.  This  is why we call  for any choice of $J$ the ideals $I^k\: J^\infty$  generalized symbolic  powers of $I$. If $I$ happens to be a prime ideal, then these are the classical symbolic powers. Finite generation of symbolic Rees algebras has been studied in many papers, for instance in \cite{K}, \cite{Sch}, \cite{KR},  and remarkable examples have been found where these algebras are not finitely generated, see \cite{R} and \cite{GNW}.
 Symbolic Rees algebras of squarefree monomial ideals can be identified with vertex cover algebras. This class of algebras is always finitely generated, as shown in \cite{HHT}.

In Section 2 we address the question under which conditions the algebra $A(I)$ is finitely generated. At the moment we can only offer very partial results. For example, we show in Corollary~\ref{application} that if
 $(R,\mm)$ is a regular local ring and  $I\subset R$  a complete intersection ideal with $\dim R/I=0$, and if either $R$  is 2-dimensional or $I^*$ is a monomial ideal, then the following conditions are equivalent: (a) $A(I)$ is standard graded, (b) $I^*$ is a complete intersection, (c)  for infinitely many integers  $k$ we have  $(I^k)^*=(I^*)^k$. It would be interesting to know whether these equivalent conditions hold without the extra assumptions on $R$ or $I^*$.

 In this paper we give a criterion for finite generation of $S_J(I)$.  We show in Theorem~\ref{strengthen} that if $R$ is an excellent local domain,  $I$ and $J$ are proper ideals of $R$ and  $\depth R_P\geq 2$ for all $P\in V(J)$, then $S_J(I)$ is finitely generated if and only if there exists an integer $r>0$ such that $\ell((I^r\: J^\infty)_P)<R_P$ for all $P\in V(J)$. Here $\ell(H)$ denotes the analytic spread of an ideal $H$. For ordinary symbolic powers a related result   was proven by Katz and Ratliff \cite[Theorem A and Corollary 1]{KR}. One direction of Theorem~\ref{strengthen} follows from part (a) of  Theorem~\ref{small}, which is inspired by a result of McAdam \cite{M}, where we give a short  direct proof of the fact that under the above conditions on $R$ and $J$, $S_J(I)$ is a graded subalgebra of the integral closure of the Rees algebra of $I$,   provided that $\ell(I_P)<\dim R_P$ for all $P\in V(J)$. This result can also  be deduced from Theorem~4.1 of Katz's paper [K] and Theorem~5.6 of Schenzel's paper [S].  In the second part of  Theorem~\ref{small} we also show that  $\lim_{k\to \infty}e((I^k:J^\infty)/I^k)/k^{\ell(I)+\dim R/J-1}$ exists and is a rational number. Here $e(M)$ denotes the multiplicity of a module $M$. In particular it follows from the above results that the
 saturated power algebra $\Dirsum_{k\geq 0}\widetilde{I^k}$ is finitely generated if $\ell(\widetilde{I})<\dim R$, and that in this case $\lim_{k\to \infty}\lambda(\widetilde{I^k}/I^k)/k^{\ell(I)-1}$ exists and is a rational number.

 It is quite interesting that the saturated powers of an ideal with $\ell(I)=\dim R$ behave quite differently. In fact,  as a complement to a result in \cite{CHST} given there for graded ideals, but in this paper with a restriction on the ring and the singular locus of $\mbox{Spec}(R/I)$, we show  in Theorem~\ref{nice}  the following: suppose that $(R,\mm)$ is a regular local ring of dimension $d$, which is essentially of finite type over a  field $K$ of characteristic zero. Suppose that $I\subset R$ is an ideal such that
the singular locus of $\mbox{Spec}(R/I)$ is $\{\mm\}$.  Then the limit
$
\lim_{k\rightarrow\infty} \lambda(\widetilde{I^k}/I^k)/k^d\in\RR
$
exists. This limit may be indeed an irrational number as shown in \cite{CHST}. Comparing this result with the above statements, we see that $\lim_{k\rightarrow\infty} \lambda(\widetilde{I^k}/I^k)/k^d=0$ if $\ell(I)<\dim R$, and that $\lim_{k\rightarrow\infty} \lambda(\widetilde{I^k}/I^k)/k^{d-1}$ never exists. It would be interesting to know whether we always have that $\lim_{k\rightarrow\infty} \lambda(\widetilde{I^k}/I^k)/k^{\ell(I)}\neq 0$ if $\ell(I)=\dim R$, and $\lim_{k\rightarrow\infty} \lambda(\widetilde{I^k}/I^k)/k^{\ell(I)-1}\neq 0$ if $\ell(I)<\dim R$. In the case where
$\ell(I) = d$ we do not have a counterexample. However, when $\ell(I) < d$ we do have
counterexamples to this statement. For example, consider $I = (x_1, \cdots , x_r) \subset K[[x_1, \ldots,  x_n]]$ with $r < n$. In this case $\ell(I) = r < n$ and $\widetilde{I^k} = I^k$ for all $k$. So the limit is certainly zero. We  do not know examples of other types of growth (such as $n^i$ with $0 < i < \ell(I) - 1$).

The authors want to thank Bernd Ulrich for several useful discussion concerning Theorem~\ref{complete}.

\section{Generalized symbolic powers}
Let $(R,\mm)$ be a local ring or a positively graded $K$-algebra with graded maximal ideal $\mm$, where $K$ is a field,  and let $I$ and $J$ be proper ideals in $R$ which are graded if $R$ is graded. In this section we want to study the algebra $S_J(I)=\Dirsum_{k\geq 0}I^k\: J^\infty$ of generalized symbolic powers of $I$ with respect to $J$. The Rees ring of $I$ will be denoted by $R(I)$ and its integral closure by $\overline{R(I)}$ in case $R$ is a domain. It turns out that the analytic spread $\ell(I)$ of $I$, which is defined to be the Krull dimension of $R(I)/\mm R(I)$, plays an important role  in the study of these algebras. The multiplicity of a finitely generated $R$-module $M$ will be denoted by $e(M)$.

Theorem~\ref{small} below is inspired by results of MacAdam \cite{M}, Ratliff \cite{R1},
Katz \cite{K}, Schenzel \cite{Sch}  and others on the
asymptotic associated primes of ideals of small analytic spread. Part (a) of Theorem~\ref{small} follows from Theorem 4.1 of Katz's paper \cite{K} and Theorem 5.6 of Schenzel's paper \cite{Sch}. We give a self contained proof for the reader's convenience.

\begin{Theorem}
\label{small}
Let $(R,\mm)$ be an excellent  domain.   Assume that for all $P\in V(J)$ we have that  {\em (i)} $\depth R_P\geq 2$, and {\em (ii)} $\ell(I_P)< \dim R_P$. Then
\begin{enumerate}
\item[{\em (a)}] $S_J(I)\subset \overline{R(I)}$. In particular, $S_J(I)$ is a finitely generated $R$-algebra.

\item[{\em (b)}] $\lim_{k\to \infty}e((I^k:J^\infty)/I^k)/k^{\ell(I)+\dim R/J-1}$ exists and is a rational number.
\end{enumerate}
\end{Theorem}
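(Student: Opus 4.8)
The plan is to prove (a) prime-by-prime, showing $I^k\:J^\infty\subseteq\overline{I^k}$ for every $k$, and then to deduce (b) from the finite generation in (a) together with the associativity formula for multiplicities, reducing to the zero-dimensional (saturated power) situation. The whole argument for (a) rests on McAdam's description of asymptotic primes. Since $R$ is an excellent domain it is universally catenary, and being a domain it is equidimensional; hence $R$ and every $R_P$ are quasi-unmixed. In this setting McAdam's theorem gives $\overline A^*(I):=\bigcup_k\Ass(R/\overline{I^k})=\{P:\ell(I_P)=\dim R_P\}$, while Ratliff's monotonicity yields $\Ass(R/\overline{I^k})\subseteq\overline A^*(I)$ for every $k$. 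Hypothesis (ii) says exactly that $V(J)\sect\overline A^*(I)=\emptyset$, so no $P\in\Ass(R/\overline{I^k})$ contains $J$. Now fix $x\in I^k\:J^\infty$. For each $P\in\Ass(R/\overline{I^k})$ we have $JR_P=R_P$, hence $(I^k\:J^\infty)_P=(I_P)^k\subseteq\overline{(I_P)^k}=(\overline{I^k})_P$ (integral closure of ideals commutes with localization). Since $\overline{I^k}=\Sect_{P\in\Ass(R/\overline{I^k})}(\overline{I^k}R_P\sect R)$, this forces $x\in\overline{I^k}$, and therefore $S_J(I)=\Dirsum_k(I^k\:J^\infty)\subseteq\Dirsum_k\overline{I^k}=\overline{R(I)}$. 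The same conclusion can be reached by the self-contained valuative route that uses the depth hypothesis (i): each Rees valuation $v$ of $I$ has its center in $\overline A^*(I)$, hence outside $V(J)$ by (ii), so choosing $a\in J$ with $v(a)=0$ and $N$ with $xa^N\in I^k$ gives $v(x)=v(xa^N)\geq kv(I)$.

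For finite generation, note that $R$ excellent makes $R(I)$ an excellent domain, so $\overline{R(I)}$ is a finite $R(I)$-module, in particular Noetherian. As $R(I)\subseteq S_J(I)\subseteq\overline{R(I)}$ with $S_J(I)$ an $R(I)$-submodule of a finite $R(I)$-module, $S_J(I)$ is itself a finite $R(I)$-module, hence a finitely generated $R$-algebra. This establishes (a).

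For (b), set $M_k=(I^k\:J^\infty)/I^k$ and $\mathcal N=S_J(I)/R(I)=\Dirsum_k M_k$, a finitely generated graded $R(I)$-module (by (a)) with $\Supp_R\mathcal N\subseteq V(J)$. For each $k$ the associativity formula reads $e(M_k)=\sum_P \lambda_{R_P}\big((M_k)_P\big)\,e(\mm;R/P)$, the sum being over the finitely many minimal primes $P$ of $\Supp M_k$ with $\dim R/P=\dim M_k$; every such $P$ lies in $V(J)$, so $\dim R/P\leq\dim R/J$. At such a minimal $P$ the module $(M_k)_P$ has finite length over $R_P$ and equals $\widetilde{(I_P)^k}/(I_P)^k$. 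Thus $\mathcal N_P=\Dirsum_k(M_k)_P$ is a finitely generated graded module over the standard graded ring $R_P(I_P)$ with finite-length components, supported in the special fibre $\Fc(I_P)=R_P(I_P)/(PR_P)R_P(I_P)$, whose Krull dimension is $\ell(I_P)$. By Hilbert--Serre, $\lambda_{R_P}\big((M_k)_P\big)$ agrees for $k\gg0$ with a polynomial in $k$ of degree at most $\ell(I_P)-1$ and with rational coefficients.

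Finally, a minimal reduction of $I$ localizes to a reduction of $I_P$, so $\ell(I_P)\leq\ell(I)$; and for $k\gg0$ both $\dim M_k$ and the indexing set of primes $P$ stabilize, by the asymptotic stability of the associated primes of the finitely generated module $\mathcal N$. Summing the local contributions, $e(M_k)$ therefore agrees for $k\gg0$ with a polynomial in $k$ of degree at most $\ell(I)-1\leq\ell(I)+\dim R/J-1$, with rational coefficients. Dividing by $k^{\ell(I)+\dim R/J-1}$ shows that the limit exists and equals either $0$ or the (rational) leading coefficient, proving (b). The main obstacle is part (a): placing $S_J(I)$ inside $\overline{R(I)}$ depends on McAdam's identification of $\overline A^*(I)$ with the locus $\{\ell(I_P)=\dim R_P\}$, and hence on the quasi-unmixedness supplied by excellence. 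Once finite generation is secured, (b) is a Hilbert-polynomial computation; finite generation is precisely what forces rationality, its failure being what permits the irrational limits mentioned in the introduction when $\ell(I)=\dim R$.
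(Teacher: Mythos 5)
Your proposal is correct, but on both halves it runs along genuinely different lines from the paper. For (a), the paper gives a deliberately self-contained argument: it reduces to showing $\overline{I^k}\:J^\infty=\overline{I^k}$, uses hypothesis (i) to get $\grade(J,R)\geq 2$ and hence $\overline{I^k}\:J^\infty/\overline{I^k}\iso H^1_J(\overline{I^k})$, and then kills $H^1_J(\overline{R(I)})$ by combining normality (Serre's condition $S_2$) of $\overline{R(I)}$ with a dimension count that uses excellence and hypothesis (ii). You instead invoke the asymptotic-prime-divisor machinery: Ratliff's monotonicity of $\Ass(R/\overline{I^k})$ together with McAdam's characterization of $\overline{A}^*(I)$ as $\{P\supseteq I \: \ell(I_P)=\dim R_P\}$ over quasi-unmixed rings (quasi-unmixedness supplied by excellence plus Ratliff's theorem on universally catenary local domains), and then finish by localizing at the associated primes of $\overline{I^k}$. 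This is precisely the ``known'' route the paper acknowledges --- it remarks that (a) follows from Theorem 4.1 of \cite{K} and Theorem 5.6 of \cite{Sch} --- and sidesteps in favor of a self-contained proof. Notably, your argument never uses hypothesis (i), so it proves (a) from (ii) and excellence alone; the price is reliance on McAdam's nontrivial theorem, where the paper needs only local cohomology, $S_2$ and the dimension formula. For (b), the paper cites \cite[Theorem 3.2 and Proposition 5.5]{HPV} to get quasi-polynomiality of $e\bigl((I^k\:J^\infty)/I^k\bigr)$ with degree bound $\ell(I)+\dim R/J-1$, while you argue directly via the associativity formula, Brodmann-type stability of $\Ass_R$ of the graded components of the finite $R(I)$-module $\Dirsum_k (I^k\:J^\infty)/I^k$, and Hilbert--Serre over $R_P(I_P)$ modulo a power of $PR_P$. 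Your route in fact yields more: the multiplicity eventually agrees with an honest polynomial in $k$ of degree at most $\ell(I)-1$, which is sharper than the paper's bound and shows the limit in (b) vanishes whenever $\dim R/J>0$. Two small points would make your write-up fully citable: McAdam's characterization concerns primes containing $I$ (harmless, since every associated prime of $\overline{I^k}$ contains $I$), and the inequality $\ell(I_P)\leq \ell(I)$ via minimal reductions requires the standard passage to an infinite residue field.
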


\begin{proof}
(a) Since $I^k\: J^\infty\subset \overline{I^k}\: J^\infty$ for all $k$, it suffices to show that $\overline{I^k}\: J^\infty=\overline{I^k}$ for all $k$.
Recall that (see \cite[Proposition 1.2.10]{BH})
\begin{eqnarray}
\label{grade}
\grade(J,R)=\inf\{\depth R_P,\; P\in V(J)\}.
\end{eqnarray}
Thus assumption (i) implies that $\grade(J,R)\geq 2$, so that $H^0_J(R)=H^1_J(R)=0$, see \cite[Theorem 6.2.7]{BS}. It follows that $\overline{I^k}\: J^\infty/\overline{I^k}=H^0_J(R/\overline{I^k})\iso H^1_J(\overline{I^k})$ for all $k$. Therefore, if $A=\overline{R(I)}$ denotes the integral closure of $R(I)$,  it remains to be shown that
$H^1_J(A)=0$  which, by \cite[Theorem 6.2.7]{BS},  is equivalent to saying that $\grade(JA,A)\geq 2$.

We apply again (\ref{grade}), this time to the ideal $JA$ and the ring $A$, and obtain that
$$\grade(JA,A)=\inf\{\depth A_Q,\; Q\in V(JA)\}.$$ Since $A$ is normal it satisfies Serre's condition $S_2$. In other words, $\depth A_Q\geq 2$ for all $Q\in \Spec(A)$ with $\dim A_Q\geq 2$. Thus we need to show that $\dim A_Q\geq 2$ for all $Q\in V(JA)$. Let $P=Q\sect R$. Then $P\in V(J)$. Localizing at $P$ we may assume that $P=\mm$. Our assumption on $R$ guarantees that
$\dim A_Q=\dim A-\dim A/Q$. Notice that $\dim A=\dim R(I)=\dim R+1$, since $A$ is a finite $R(I)$-module, and that $\dim A/Q\leq \dim A/\mm A=\dim R(I)/\mm R(I)=\ell(I)$. Thus condition (ii) implies that $\dim A_Q\geq \dim R+1-\ell(I)\geq 2$, as desired.

(b) By part (a), the algebra $S_J(I)$ is finitely generated. Thus  \cite[Theorem 3.2]{HPV} implies that there exist  polynomials $P_0, \ldots, P_{g-1}$, all of same degree and with same leading coefficient,  such that  $e(I^{mg+i}:J^\infty/J^{mg+i})=P_i(m)$  for $i=0,\ldots, g-1$ and all $m\gg 0$. For each $i$, the modules $I^{mg+i}:J^\infty/J^{mg+i}$ have constant dimension for $m\gg 0$, say $d_i$. Since they are supported in $V(J)$ it follows that $d_i\leq \dim R/J$ for all $i$. Thus applying \cite[Proposition 5.5]{HPV} we see that $\deg P_i\leq \ell(I)+\dim R/J-1$ for all $i$, and thus statement (b) follows.
\end{proof}

\begin{Example}
\label{marc}
{\em (Marc Chardin) Let $I=(xw-yz,x^2,z^2)\subset S=K[x,y,z,w]$.  Then $\ell(I)=3<\dim S$ and $e(\widetilde{I^k}/I^k)=\lambda(\widetilde{I^k}/I^k)={k+1\choose 2}$. It follows   that
$$\lim_{k\to \infty} e(\widetilde{I^k}/I^k)/k^{\ell(I)-1}=1/2.$$
This example shows that the limit would not exist, if we would choose smaller power of $k$ than $\ell(I)-1$.

To prove this we consider a presentation of the Rees ring $R=S[y_1,y_2,y_3]\to R(I)$ with $y_1\mapsto (xw-yz)t$, $y_2\mapsto x^2t$ and $y_3\mapsto z^2t$. The kernel $J$ of this map is:
\begin{eqnarray*}
(z^2y_1 + yzy_3 - xwy_3,& yzy_1 + xwy_1 - w^2y_2 + y^2y_3, \\ xzy_1 - zwy_2 + xyy_3, & z^2y_2 - x^2y_3, -x^2y_1 - yzy_2 + xwy_2).
\end{eqnarray*}
Thus $\ell(I)=3$.

If we set $\deg x= \deg y =\deg y=\deg z=(1,0)$ and $\deg y_i=(0,1)$ then $R(I)$ is a bigraded algebra with
\[
R(I)_{(j,k)}=(I^k)_{j+2k} \quad \text{for all $j$ and $k$.}
\]
The bigraded $R$-resolution of $R(I)$ is of the form
\[
\FF: 0\to R(-4,-1)\to R(-3,-1)^4\dirsum R(-2,-2)\to R(-2,-1)^5\to R\to R(I)\to 0
\]
where the last map $R(-4,-1)\to R(-3,-1)^4\dirsum R(-2,-2)$ is given by the $1\times 5$-matrix  $(-x,y,-z,w,0)^t$.

If we take the graded pieces $\FF_{(*,k)}=\Dirsum_j \FF_{(j,k)}$ of the resolution we obtain the exact sequences of $S$-modules
\begin{eqnarray*}
0\to R(-4,-1)_{(*,j)}\to R(-3,-1)_{(*,j)}^4\dirsum & R(-2,-2)_{(*,k)}&\to \\
& R(-2,-1)_{(*,k)}^5& \to R_{(*,k)}\to I^k(2k)\to 0.
\end{eqnarray*}
Since $R(-a,-b)_{(*,k)}=\Dirsum_{a_1+a_2+a_3=k-b} S(-a)y_1^{a_1}y_2^{a_2}y_3^{a_3}\iso S(-a-2k)^{k-b+2\choose 2}$, we obtain for each $k$ the free $S$-resolution
\begin{eqnarray*}
0\to S(-4)^{{k+1\choose 2}}\to S(-3)^{4{k+1 \choose 2}}\dirsum & S(-2)^{{k\choose 2}}& \to \\
& S(-2)^{5{k+1\choose 2}}&\to S^{k+2\choose 2}\to I^{k}(2k)\to 0,
\end{eqnarray*}
where the last map in the resolution maps the basis element $y_1^{a_1}y_2^{a_2}y_3^{a_3}$ with $a_1+a_2+a_3=k-1$ to
$-x y_1^{a_1}y_2^{a_2}y_3^{a_3}-y y_1^{a_1}y_2^{a_2}y_3^{a_3}-z y_1^{a_1}y_2^{a_2}y_3^{a_3}+w y_1^{a_1}y_2^{a_2}y_3^{a_3}.$
Since the cokernel of  the transpose of this map  is $\Ext^3_S(I^k,S)$ we obtain, applying local duality,
\begin{eqnarray}
\label{second}
\widetilde{I^k}/I^k\iso H^0_\mm(S/I^k)\iso \Ext^4_S(S/I^k,S)^\vee\iso \Ext^3_S(I^k,S)^\vee\iso K^{{k+1\choose 2}},
\end{eqnarray}
as desired. Here $N^\vee$ denotes the dual of $N$ with respect to the injective hull of $K$.}
\end{Example}

Let  $R$ be the polynomial ring in $n$ variables over a field of characteristic $0$ and $I\subset R$ be a graded ideal. In \cite[Theorem 0.1]{CHST} it is shown that $\lim_{k\to \infty}\length(\widetilde{I^k}/I^k)/k^n$ exists, but may be an irrational number. Of course, according to Theorem~\ref{small}, this limit can be an irrational number only if $\ell(I)=\dim R$.  In the  following example this limit is a nonzero rational number.

\begin{Example} {\em Let $I=(xy,xz,yz)$. It is easily seen that $\ell(I)=\dim K[x,y,z]=3$ and that $\widetilde{I^2}=(I^2,xyz)$. It is shown in \cite[Proposition 5.3 and Example 4.7]{HHT}  that
\[
\widetilde{I^{2k}}=(\widetilde{I^2})^k= (I^2,xyz)^k= (I^{2k},xyzI^{2(k-1)},\ldots, (xyz)^jI^{2(k-j)},\ldots, (xyz)^k) \quad \text{for all}\quad k.
\]
Let  $\Gen(I^k)$ denotes the minimal set of monomial generators of $I^k$. We claim that set of the monomials $B=\Union_{j=1}^{k}(xyz)^jG(I^{2(k-j)})$  forms $K$-basis of $\widetilde{I^{2k}}/I^{2k}$. Indeed, for all $j=0,\ldots,k$ we have
\begin{eqnarray*}
\{x,y,z\}(xyz)^j\Gen(I^{2(k-j)})&=& (xyz)^{j-1}\{x,y,z\}(xyz)\Gen(I^{2(k-j)})\\
&\subset & (xyz)^{j-1}\Gen(I^{2k})\Gen(I^{2(k-j)})
=(xyz)^{j-1}\Gen(I^{2(k-j+1)}).
\end{eqnarray*}
This together with (\ref{second}) implies that $B$ is system of generators of the $K$-vector space $\widetilde{I^{2k}}/I^{2k}$.
Since for  $j=1,\ldots, k$ the degree of the elements  in $xyz^j\Gen(I^{2(k-j)})$ is $4k-j$, it follows that the elements in $B$ are $K$-linearly independent modulo $I^{2k}$.

We have $|\Gen(I^k)|={k+2\choose 2}$, since $\ell(I)=3$. Thus we conclude that
\[
\lambda(\widetilde{I^{2k}}/I^{2k})=\sum_{j=0}^{k-1} |\Gen(I^{2j})|=\sum_{j=0}^{k-1}{2j+2\choose 2}=\frac{2}{3}k^3+\frac{1}{2}k^2-\frac{1}{6}k.
\]
Since we know that $\lim_{k\to \infty} \length(\widetilde{I^{k}}/I^{k})/k^3$ exists, we see that
$$\lim_{k\to \infty} \lambda(\widetilde{I^{k}}/I^{k})/k^3=\lim_{k\to \infty} \lambda(\widetilde{I^{2k}}/I^{2k})/(2k)^3=1/12.$$
}
\end{Example}

\medskip
Theorem~\ref{small} has the following surprising consequences:

\begin{Corollary}
\label{surprising1}
Let $I\subset S=K[x_1,\ldots,x_n]$ be a graded ideal generated in degree $d$.
\begin{enumerate}
\item[{\em (a)}] If $\ell(I)<n$, then all generators of $\widetilde{I}$ are of degree $\geq d$.
\item[{\em (b)}] Suppose in addition that all powers of $I$ have a linear resolution, and that\\ $\depth S/I^r=0$ for some $r$. Then $\ell(I)=n$ and $\lim_{k\to\infty}\lambda(\widetilde{I^k}/I^k)/n^k\neq 0$.
\end{enumerate}
\end{Corollary}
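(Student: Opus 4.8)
The plan is to prove Corollary~\ref{surprising1} as two logically linked parts, using Theorem~\ref{small} together with standard facts about linear resolutions and saturation.

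For part (a), I would argue by contraposition via the analytic-spread criterion in Theorem~\ref{small}. The ideal $\widetilde I=I\:\mm^\infty$ is the first generalized symbolic power $S_\mm(I)$ in degree one. Suppose $\ell(I)<n=\dim S$; since $S$ is a polynomial ring (hence excellent, Cohen--Macaulay of dimension $n\geq 2$ wherever the relevant primes sit, and $\depth S_P\geq 2$ for every nonmaximal homogeneous $P$), the hypotheses of Theorem~\ref{small} are met with $J=\mm$, because the only prime in $V(\mm)$ is $\mm$ itself, $\depth S_\mm=n$, and $\ell(I_\mm)=\ell(I)<n=\dim S_\mm$. Hence $S_\mm(I)\subset\overline{R(I)}$, so in particular $\widetilde I\subset\overline I$. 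The integral closure $\overline I$ of an ideal generated in degree $d$ contains no elements of degree $<d$, since $\overline I$ is generated by elements integral over $I$ and integrality forces a homogeneous element $f$ of degree $e<d$ to satisfy $f^m\in I^m$ for some $m$, which is impossible by comparing degrees (the generators of $I^m$ all have degree $md>me$). Therefore every generator of $\widetilde I$ has degree $\geq d$, which is exactly (a).

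For part (b), the extra hypotheses are that all powers $I^k$ have linear resolutions and that $\depth S/I^r=0$ for some $r$. The first step is to deduce $\ell(I)=n$. If instead $\ell(I)<n$, then by Theorem~\ref{small}(a) applied as above with $J=\mm$, the algebra $S_\mm(I)$ lies in $\overline{R(I)}$; more to the point, $\depth S_P\geq 2$ forces $H^0_\mm(S/I^k)=\widetilde{I^k}/I^k$ to be controlled, and in fact the argument shows $\widetilde{I^k}=I^k$ is saturated, giving $\depth S/I^k\geq 1$ for all $k$. This contradicts $\depth S/I^r=0$. (Alternatively, $\depth S/I^r=0$ means $\mm\in\Ass(S/I^r)$, so $\mm$ is an asymptotic prime; by the theory of McAdam cited in the introduction, a maximal asymptotic prime of an ideal forces $\ell(I_\mm)=\dim S_\mm$, i.e. $\ell(I)=n$.) Once $\ell(I)=n$ is established, I would invoke the linear-resolution hypothesis: when $I^k$ has a linear resolution generated in degree $kd$, its regularity is $\reg I^k=kd$, and the saturation defect $\lambda(\widetilde{I^k}/I^k)$ is computed from $H^0_\mm(S/I^k)$, whose top graded piece sits in degree $\reg(S/I^k)$. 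Since $\ell(I)=n$, Theorem~\ref{small}(b) (or rather the result of \cite{CHST} quoted immediately after Theorem~\ref{small}) guarantees that $\lim_{k\to\infty}\lambda(\widetilde{I^k}/I^k)/k^n$ exists; the task is to show it is nonzero.

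The main obstacle is precisely this nonvanishing of the limit in part (b). Showing existence is free from the cited results, but proving the limit is strictly positive requires a lower bound $\lambda(\widetilde{I^k}/I^k)\geq c\,k^n$ for some $c>0$ and all large $k$. My plan for this is to exploit the linearity of the resolutions to pin down $H^0_\mm(S/I^k)$ via local duality, analogous to the computation in Example~\ref{marc} and the subsequent monomial example: linear resolutions make the relevant $\Ext$ modules $\Ext^n_S(S/I^k,S)$ concentrated in predictable degrees, and the hypothesis $\depth S/I^r=0$ (hence $\depth S/I^k=0$ for all $k\geq r$, which follows from the linear-resolution stability of depth for powers) guarantees $H^0_\mm(S/I^k)\neq 0$ for all large $k$. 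The delicate point is converting this nonvanishing at each level $k$ into a growth rate of order exactly $k^n$ rather than something smaller; this should follow because with $\ell(I)=n$ the associated graded/fiber cone has full dimension $n$, so the Hilbert function governing the saturation defect grows like a degree-$(n-1)$ polynomial in each graded degree and summing over the $O(k)$ relevant degrees yields order $k^n$. I expect that making this last dimension count rigorous—relating the fiber-cone dimension $\ell(I)=n$ to a genuine $k^n$ lower bound on $\lambda(\widetilde{I^k}/I^k)$—will be where the real work lies, and it may require an explicit description of the top local cohomology in the spirit of the displayed resolution computations earlier in the paper.
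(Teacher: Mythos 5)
Your part~(a) is correct, but it takes a genuinely different route from the paper. You use the containment $S_\mm(I)\subset\overline{R(I)}$ from Theorem~\ref{small}(a), hence $\widetilde I\subset\overline I$, and then rule out low-degree elements of $\overline I$ by a degree count (that count should be run on the homogeneous components of an equation of integral dependence, or on the valuative criterion $cf^m\in I^m$ for all $m\gg 0$; your literal claim that integrality gives $f^m\in I^m$ is not quite accurate, though easily repaired). The paper instead argues through the growth estimate of Theorem~\ref{small}(b): a homogeneous $g\in\widetilde I$ of degree $c<d$ yields elements $g^j(I^{k-j})_{(k-j)d}\subset\widetilde{I^k}$ of degrees $jc+(k-j)d<kd$, distinct for distinct $j$, whence $\lambda(\widetilde{I^k}/I^k)\geq\sum_{j=1}^k\dim_K(I^{k-j})_{(k-j)d}$, which grows like $k^{\ell(I)}$ because the fiber cone has dimension $\ell(I)$ --- contradicting the exponent $\ell(I)+\dim S/\mm-1=\ell(I)-1$ in Theorem~\ref{small}(b). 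This difference is not cosmetic: the paper's proof of (a) is exactly the engine that gets reused in (b), whereas your proof of (a) produces no growth estimate to reuse.

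In part~(b) there are two genuine gaps, and the first is a step that fails outright. Neither of your arguments for $\ell(I)=n$ is valid: Theorem~\ref{small}(a) gives $\widetilde{I^k}\subset\overline{I^k}$ and that $\overline{I^k}$ is saturated, not that $\widetilde{I^k}=I^k$, so $\ell(I)<n$ does not imply $\depth S/I^k\geq 1$; and your parenthetical alternative --- that $\mm\in\Ass(S/I^r)$ forces $\ell(I)=n$ --- confuses ordinary associated primes of powers with the asymptotic primes of the integral closures $\overline{I^k}$, which is what McAdam's theorem actually characterizes. The paper's own Example~\ref{marc} refutes both claims at once: there $\ell(I)=3<4=n$, yet $\widetilde{I^k}/I^k\iso K^{{k+1\choose 2}}\neq 0$, i.e.\ $\depth S/I^k=0$, for every $k\geq 1$. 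The linear-resolution hypothesis must enter precisely at this step, and in the paper it does: since $I^r$ has a linear resolution and $\depth S/I^r=0$, the $(n-1)$th syzygy module of $I^r$ has a generator in degree $rd+n-1$, and by duality $\widetilde{I^r}$ then contains an element $f$ of degree $rd-1<rd$; part~(a) applied to $I^r$ forces $\ell(I)=\ell(I^r)=n$. Second, the positivity of the limit, which you yourself concede is ``where the real work lies,'' is left as a heuristic in your proposal; the paper settles it by running the part~(a) construction with this element $f$: the elements $f^j(I^{r(k-j)})_{r(k-j)d}$ lie in $\widetilde{I^{rk}}$, have degrees $rkd-j<rkd$, and give $\lambda(\widetilde{I^{rk}}/I^{rk})\geq\sum_{j=1}^k\dim_K(I^{r(k-j)})_{r(k-j)d}$, a polynomial lower bound of degree $\ell(I)=n$ in $k$; combined with the existence of the limit (\cite[Theorem 0.1]{CHST}) this yields a strictly positive value. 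So your proposal establishes (a), but (b) as outlined does not go through.
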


\begin{proof}
(a) Suppose there exists  $g\in \widetilde{I}$ with $\deg g=c<d$. Then  $g^j(I^{k-j})_{(k-j)d}\subset \widetilde{I}^k$ for $j=1,\ldots,k$, and the elements in $g^j(I^{k-j})_{(k-j)d}$ are homogeneous of degree $jc+(k-j)d<kd$. It follows that $\lambda (\widetilde{I}^k/I^k)\geq \sum_{j=1}^k\dim_K (I^{k-j})_{(k-j)d}$. Thus we see that $\lambda (\widetilde{I}^k/I^k)$ grows like a polynomial of degree $\geq \ell(I)$, contradicting Theorem~\ref{small}(b).

(b) Assuming that $I^r$ has a linear resolution and that $\depth S/I^r=0$ implies that the $(n-1)$th syzygy module of $I^r$ has a generator of degree $rd+n-1$ which in turn implies that there is an element $f\in
\widetilde{I^r}$ of degree $rd-1$. It follows from part (a) that $\ell(I)=\ell(I^r)=n$. The proof of part (a) also shows that $\lambda (\widetilde{I}^k/I^k)\geq p(k)$, where $p$ is a polynomial of degree $\geq \ell(I)$. Since $\lim_{k\to\infty}\lambda(\widetilde{I^k}/I^k)/n^k$ exists it follows that $\deg p=\ell(I)$ and that $\lim_{k\to\infty}\lambda(\widetilde{I^k}/I^k)/n^k$ is greater than or equal to the  leading coefficient of $p$.
\end{proof}

 Theorem~\ref{small} can be used to derive the following  finiteness criterion. A  related result for ordinary symbolic powers was proven by Katz and Ratliff in Theorem A and Corollary~1 of \cite{KR}.

\begin{Theorem}
\label{strengthen}
Let $(R,\mm)$ be an excellent domain, and let $I$ and $J$ be proper ideals of $R$. Assume that $\depth R_P\geq 2$ for all $P\in V(J)$. Then the following conditions are equivalent:
\begin{enumerate}
\item[{\em (a)}]  $S_J(I)$ is finitely generated.
\item[{\em (b)}]  There exists an integer $r>0$ such that $\ell((I^r\: J^\infty)_P)<\dim R_P$ for all $P\in V(J)$.
\end{enumerate}
\end{Theorem}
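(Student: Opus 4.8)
The plan is to reduce both implications to Theorem~\ref{small} by passing to a Veronese subalgebra. Fix an integer $r>0$ and set $I'=I^r\:J^\infty$. Since $I^r\subseteq I'$ and $I'J^N\subseteq I^r$ for $N\gg 0$, I get the chain $I^{rk}\subseteq (I')^k\subseteq I^{rk}\:J^\infty$, and applying $(-)\:J^\infty$ throughout yields the identity $(I')^k\:J^\infty=I^{rk}\:J^\infty$ for all $k$. Consequently $S_J(I')=\Dirsum_{k\ge 0}\big((I')^k\:J^\infty\big)$ is exactly the $r$-th Veronese subalgebra $S_J(I)^{(r)}$. This identity is the device that lets me transfer finite generation back and forth.

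For (b) $\Rightarrow$ (a) I would choose $r$ as in (b), so that $\depth R_P\geq 2$ and $\ell(I'_P)<\dim R_P$ for all $P\in V(J)$. Theorem~\ref{small}(a) applied to the pair $(I',J)$ then shows that $S_J(I')=S_J(I)^{(r)}$ is a finitely generated, hence Noetherian, $R$-algebra. To descend from the Veronese to $S_J(I)$ itself, I observe that every homogeneous element $xt^k\in S_J(I)$ has $x\in I^k\:J^\infty$, whence $x^r\in I^{rk}\:J^\infty$ and $(xt^k)^r\in S_J(I)^{(r)}$; thus $S_J(I)$ is integral over $S_J(I)^{(r)}$. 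Picking $0\neq a\in I$, the elements $at$ and $a^rt^r$ show that $\operatorname{Frac}(S_J(I))=\operatorname{Frac}(R)(t)$ is a degree-$r$ extension of $\operatorname{Frac}(S_J(I)^{(r)})=\operatorname{Frac}(R)(t^r)$. Since $R$, and hence the finitely generated $R$-algebra $S_J(I)^{(r)}$, is excellent, the integral closure of $S_J(I)^{(r)}$ in this finite field extension is a finite $S_J(I)^{(r)}$-module; as $S_J(I)$ lies inside that integral closure and $S_J(I)^{(r)}$ is Noetherian, $S_J(I)$ is a finite $S_J(I)^{(r)}$-module, and therefore a finitely generated $R$-algebra.

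For (a) $\Rightarrow$ (b) I would argue through asymptotic prime divisors. If $S_J(I)$ is finitely generated, then for a suitable $r$ its $r$-th Veronese $S_J(I)^{(r)}$ is standard graded over $R$, and by the identity above this says $(I')^k=I^{rk}\:J^\infty$ for all $k$, with $I'=I^r\:J^\infty$. In particular each $(I')^k$ is $J$-saturated, so no associated prime of $R/(I')^k$ lies in $V(J)$. Suppose toward a contradiction that $\ell(I'_P)=\dim R_P$ for some $P\in V(J)$. Because $R$ is an excellent domain it is universally catenary and (being a domain) equidimensional, hence locally quasi-unmixed, so McAdam's theorem \cite{M} gives $P\in\bar A^*(I')$, the stable value of $\Ass(R/\overline{(I')^k})$. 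Combining this with the containment $\bar A^*(I')\subseteq A^*(I')=\Ass(R/(I')^k)$ for $k\gg 0$ \cite{M,R1}, I obtain $P\in\Ass(R/(I')^k)$ for large $k$, contradicting $J$-saturation. Hence $\ell(I'_P)<\dim R_P$ for all $P\in V(J)$, which is (b).

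The direction (b) $\Rightarrow$ (a) is the clean one: once the Veronese identity $(I')^k\:J^\infty=I^{rk}\:J^\infty$ is in place, the integrality observation $(xt^k)^r\in S_J(I)^{(r)}$ together with finiteness of integral closure in a finite field extension does all the work. I expect the main obstacle to be (a) $\Rightarrow$ (b), where everything rests on the asymptotic prime divisor machinery: one must correctly invoke McAdam's equality $\{P:\ell(I'_P)=\dim R_P\}=\bar A^*(I')$ in the locally quasi-unmixed setting, justify the passage from associated primes of $\overline{(I')^k}$ to those of $(I')^k$, and confirm that excellence of the domain $R$ genuinely supplies the local quasi-unmixedness these results require.
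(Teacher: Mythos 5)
Your proof is correct, but in both directions it takes a genuinely different route from the paper's, even though both hinge on the same colon identity (the ``claim'' inside the paper's proof) $(I^r:J^\infty)^k:J^\infty=I^{rk}:J^\infty$, equivalently $S_J(I^r:J^\infty)=S_J(I)^{(r)}$; your derivation of it, sandwiching $I^{rk}\subseteq (I^r:J^\infty)^k\subseteq I^{rk}:J^\infty$ via $(I^r:J^\infty)J^N\subseteq I^r$, is in fact cleaner than the paper's element-by-element computation. For (b)$\Rightarrow$(a), both you and the paper apply Theorem~\ref{small}(a) to $L=I^r:J^\infty$, but the paper descends to $S_J(I)$ entirely through the criterion of \cite[Theorem 2.1]{HHT} (finitely generated iff some Veronese is standard graded): finite generation of $S_J(L)$ yields an $s$ with $(L^s:J^\infty)^k=L^{sk}:J^\infty$, and the colon identity converts this into standard gradedness of $S_J(I)^{(rs)}$. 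Your descent --- $S_J(I)$ is integral over $S_J(I)^{(r)}$ and lies in its integral closure inside the degree-$r$ extension $\operatorname{Frac}(R)(t)\supset \operatorname{Frac}(R)(t^r)$, which is module-finite because a finitely generated algebra over an excellent ring is Nagata --- is equally sound, trading the \cite{HHT} criterion for a second use of excellence. For (a)$\Rightarrow$(b) the arguments are truly different: the paper argues by hand with local cohomology of the Rees algebra (saturation of all powers of $L_P$ together with $\depth R_P\geq 2$ forces $H^1_{\mm}(R(L_P))=0$, hence $\grade(\mm R(L_P))\geq 2$ by \cite[Theorem 6.2.7]{BS} and $\ell(L_P)\leq \dim R_P-1$), and this is exactly where the depth hypothesis enters; you instead import the asymptotic prime divisor machinery --- Ratliff's theorem that an excellent local domain is quasi-unmixed, McAdam's theorem \cite{M} giving $P\in\bar A^*(I^r:J^\infty)$ when $\ell((I^r:J^\infty)_P)=\dim R_P$, the containment $\bar A^*\subseteq A^*$, and Brodmann stability --- and contradict $J$-saturation. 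All of these are genuine theorems, correctly invoked, and notably your version never uses $\depth R_P\geq 2$ in this direction; what the paper's argument buys is self-containedness (it is essentially a hand-made depth-two instance of Burch's inequality), while yours buys a conceptual placement of the result squarely in the McAdam--Ratliff theory that the paper itself cites as its inspiration.
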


\begin{proof} We use the criterion which says that $S_J(I)$ is finitely generated if and only if for some integer $d>0$ the  $d$th Veronese subalgebra $S_J(I)^{(d)}$ is standard graded, see for example \cite[Theorem 2.1]{HHT}

(a)\implies (b): We choose an integer $r$ such that $S_J(I)^{(r)}$ is standard graded. Then this implies that $(I^r\: J^\infty)^k=(I^{rk}\:J^\infty)$ for all $k$.  Hence if for a given   $P\in V(J)$ we set   $L=(I^r\: J^\infty)_P$, then it follows that all powers $L^k$ of $L$ are saturated in $R_P$, and assertion (b) is a consequence of the following claim: let $(R,\mm)$ be an excellent  local ring with $\depth R\geq 2$, and $I\subset R$ an ideal with the property that all powers of $I$ are saturated. Then $\ell(I)<\dim R$.

For the proof of the claim we view the Rees algebra  $R(I)=\Dirsum_kI^k t^k$ via the natural inclusion as a graded  subalgebra of the polynomial ring $R[t]$. We then get an short exact sequence of graded $R(I)$-modules
\[
0\To R(I)\To R[t]\To N\To 0 \quad \text{with} \quad N=R[t]/R(I),
\]
which induces the exact sequence
\[
H^0_\mm(N)\to H^1_\mm(R(I))\To H^1_\mm(R[t]).
\]
We notice that $H^0_\mm(N)=\Dirsum_k H^0_\mm(R/I^k)=0$ since all powers of $I$ are saturated, and that $H^1_\mm(R[t])=0$ since $\depth R\geq 2$. It follows that $H^1_{\mm R(I)}(R(I))=H^1_\mm(R(I))=0$. By \cite[Theorem 6.2.7]{BS}, this implies that  $\grade \mm R(I)\geq 2$. Therefore $\ell(I)=\dim R(I)/\mm R(I)\leq (\dim R+1)-2< \dim R$, as desired.

(b) \implies (a): Let $r>0$ be the  integer such that $\ell((I^r\: J^\infty)_P)<R_P$ for all $P\in V(J)$, and set $L=I^r\: J^\infty$. Then by Theorem~\ref{small}(a) we know that $S_J(L)$ is finitely generated. Thus there exists an integer $s>0$ such that $S_J(L)^{(s)}$ is standard graded. In other words, $(L^s\: J^\infty)^k=L^{ks}\: J^\infty$ for all $k$. Since $L=I^r\: J^\infty$ this is equivalent to saying that
\begin{eqnarray}
\label{any}
[(I^r\: J^\infty)^s\: J^\infty]^k=(I^r\: J^\infty)^{ks}\: J^\infty.
\end{eqnarray}
Now we  claim that for any two integers $i,j>0$ one has that $(I^i\: J^\infty)^j\:J^\infty=I^{ij}\: J^\infty$. The claim and (\ref{any}) then implies that $(I^{rs}\:J^\infty)^k=I^{rsk}\: J^\infty$ for all $k$. Hence $S_J(I)^{(d)}$ with $d=rs$ is standard graded, and so $S_J(I)$ is finitely generated.

In order to prove the claim first notice that   $I^i\subset I^i\: J^\infty$, so that $I^{ij}\subset (I^i\: J^\infty)^j$ and hence $I^{ij}\: J^\infty\subset (I^i\: J^\infty)^j\: J^\infty$. On the other hand, if $f\in (I^i\: J^\infty)^j\: J^\infty$, then $J^rf\in (I^i\: J^\infty)^j$ for  some $r>0$. Therefore there exist $g_1,\ldots, g_t\in I^i\: J^\infty$ and $c_{j_1,\ldots,j_t}\in R$ such that $J^rf= \sum c_{j_1,\ldots,j_t}g_1^{j_1}\cdots g_t^{j_t}$ where the sum is taken over all sequences  $(j_1,\ldots, j_t)$ of nonnegative integers with $j_1+j_2+\cdots +j_t=j$. For each $g_{j_k}$ there exists an integer $r_k>0$ such that $J^{r_k}g_{j_k}\in I^i$. Thus for a suitable big enough integer $\rho>0$ we get that $J^{\rho}f\in I^i$. In other words,  $f\in I^i\: J^\infty$.
\end{proof}

Theorem~\ref{strengthen} implies immediately the following result of Katz \cite{K} in case $R$ is excellent.

\begin{Corollary}
\label{particular}
The algebra  $\Dirsum_{k\geq 0}\widetilde{I^k}$ is finitely generated, if and only if  $\ell(\widetilde{I^r})<\dim R$ for some integer $r>0$.
\end{Corollary}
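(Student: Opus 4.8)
The plan is to obtain Corollary~\ref{particular} as the special case $J=\mm$ of Theorem~\ref{strengthen}. First I would record that for this choice of $J$ the generalized symbolic power $I^k\:\mm^\infty$ is, by the definition of the saturated power, equal to $\widetilde{I^k}$, so that the algebra $S_\mm(I)=\Dirsum_{k\geq 0}I^k\:\mm^\infty$ coincides with $\Dirsum_{k\geq 0}\widetilde{I^k}$, the algebra whose finite generation is at issue. Since Theorem~\ref{strengthen} is already an equivalence, once the hypotheses are checked both implications come for free, and no separate argument for either direction is needed.

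The substance of the proof is then the specialization of the hypotheses and of condition (b) of Theorem~\ref{strengthen} to $J=\mm$. Because $\mm$ is the maximal ideal of the local ring $R$, the closed set $V(\mm)$ consists of the single prime $\mm$. Hence the blanket assumption $\depth R_P\geq 2$ for all $P\in V(J)$ collapses to the single requirement $\depth R\geq 2$, and the universally quantified condition (b), asserting the existence of an integer $r>0$ with $\ell((I^r\:J^\infty)_P)<\dim R_P$ for all $P\in V(J)$, collapses to its instance at $P=\mm$. Localizing at $\mm$ changes nothing, as $R$ is already local with maximal ideal $\mm$; thus $(I^r\:\mm^\infty)_\mm=I^r\:\mm^\infty=\widetilde{I^r}$ and $\dim R_\mm=\dim R$, so condition (b) reads exactly: there exists $r>0$ with $\ell(\widetilde{I^r})<\dim R$. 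With these identifications in place, Theorem~\ref{strengthen} becomes precisely the asserted equivalence.

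There is essentially no genuine obstacle here; the entire argument is the collapsing of the quantifiers ``for all $P\in V(J)$'' to statements about $R$ itself, which rests on the trivial observations that $V(\mm)=\{\mm\}$ and that localization at the maximal ideal of a local ring is the identity. The only point I would be careful to flag is that the hypothesis $\depth R\geq 2$ is silently inherited from Theorem~\ref{strengthen} and cannot be omitted: in dimension one every nonzero ideal is $\mm$-primary, so $R/I^k$ is Artinian and $\widetilde{I^k}=R$ for all $k$, whence $\Dirsum_{k\geq 0}\widetilde{I^k}$ is finitely generated while $\ell(\widetilde{I^r})=\dim R$ fails to be strictly smaller than $\dim R$; thus the stated equivalence holds only under the standing assumption $\depth R\geq 2$ guaranteeing $\grade(\mm,R)\geq 2$.
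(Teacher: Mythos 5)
Your proof is correct and takes exactly the paper's approach: the paper derives Corollary~\ref{particular} as an immediate specialization of Theorem~\ref{strengthen} to $J=\mm$, which is precisely the collapsing of quantifiers over $V(\mm)=\{\mm\}$ that you carry out. Your closing observation that the standing hypotheses of Theorem~\ref{strengthen} (excellent domain, $\depth R\geq 2$) are silently inherited and genuinely needed is a sound and worthwhile clarification.
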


In general, $\Dirsum_{k\geq 0}\widetilde{I^k}$ is not finitely generated. Nevertheless we have

\begin{Theorem}
\label{nice}
Suppose that $(R,\mm)$ is a regular local ring of dimension $d$, which is essentially of finite type over a  field $K$ of characteristic zero. Suppose that $I\subset R$ is an ideal such that
the singular locus of $\mbox{Spec}(R/I)$ is $\mm$.  Then the limit
$$
\lim_{k\rightarrow\infty} \frac{\lambda(\widetilde{I^k}/I^k)}{k^d}\in\RR
$$
exists.
\end{Theorem}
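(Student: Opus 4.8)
The plan is to reduce the statement to the asymptotics of a sequence of finite length local cohomology modules and then to analyze that sequence on a resolution of singularities, where the characteristic zero and the isolated singularity hypotheses each play an essential role (the cases $d\le 1$ being immediate, so I assume $d\ge 2$). First I would record the identification $\lambda(\widetilde{I^k}/I^k)=\lambda(H^0_{\mm}(R/I^k))$, which is of finite length since $H^0_{\mm}$ of a finitely generated module always is, and note that both $\{I^k\}$ and $\{\widetilde{I^k}\}$ are graded families of ideals. To separate the arithmetic and the geometric contributions I would insert the integral closure: because the singular locus of $\Spec(R/I)$ is $\{\mm\}$, for every prime $P\neq\mm$ the ideal $I_P$ is part of a regular system of parameters of $R_P$, whence its powers are integrally closed. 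Thus $\overline{I^k}/I^k$ is supported at $\mm$, we have $I^k\subseteq\overline{I^k}\subseteq\widetilde{I^k}$ (the second inclusion because $\overline{I^k}/I^k$ has finite length, so is killed by a power of $\mm$), and
\[
\lambda(\widetilde{I^k}/I^k)=\lambda(\overline{I^k}/I^k)+\lambda(\widetilde{I^k}/\overline{I^k}),
\]
so it suffices to show each summand, divided by $k^d$, converges.

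For the first summand I would argue purely algebraically. Since $\overline{I^k}=I^k$ away from $\mm$, the module $M=\Dirsum_k\overline{I^k}/I^k$ is a finitely generated graded module over the standard graded Rees algebra $R(I)$ that is annihilated by a fixed power $\mm^N$ (its finitely many generators are each killed by a power of $\mm$, and $R$ is central in $R(I)$). Hence $M$ is a finitely generated graded module over the standard graded algebra $R(I)/\mm^N R(I)$ over the Artinian ring $R/\mm^N$, so its Hilbert function $k\mapsto\lambda_R(\overline{I^k}/I^k)$ is eventually a polynomial in $k$ of degree at most $d$. In particular $\lim_k\lambda(\overline{I^k}/I^k)/k^d$ exists and is rational.

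The essential difficulty lies in the second summand $\lambda(\widetilde{I^k}/\overline{I^k})=\lambda(H^0_{\mm}(R/\overline{I^k}))$, and here I would pass to geometry. Invoking Hironaka's resolution in characteristic zero, choose a proper birational $\pi\colon X\to Y=\Spec R$ with $X$ regular and $I\OO_X=\OO_X(-Z)$ for an effective divisor $Z$ with normal crossing support, so that $\overline{I^k}=\Gamma(X,\OO_X(-kZ))$ and $\widetilde{I^k}=\overline{I^k}\colon\mm^\infty$. The isolated singularity hypothesis now enters decisively: it forces the modules measuring the failure of $\overline{I^k}$ to be saturated to be supported over the single point $\mm$, hence to be computable, via the theorem on formal functions, from the cohomology of the powers $\OO_X(-kZ)$ along the fibre $E=\pi^{-1}(\mm)$, which is proper over the residue field. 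The remaining task is to prove that these finite length cohomology lengths grow like $ck^d+o(k^d)$ for a genuine real constant $c$. I would obtain this by an asymptotic Riemann--Roch / volume argument for the $\pi$-nef and $\pi$-big line bundle $\OO_X(-Z)$, using the characteristic zero vanishing theorems (Grauert--Riemenschneider and relative Kawamata--Viehweg) to push the higher cohomology into lower order, together with Fujita approximation (equivalently, Okounkov bodies) to secure convergence of the normalized counts; this is precisely the mechanism behind the analogous graded statement in \cite{CHST}.

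I expect the main obstacle to be exactly this last step: upgrading the a priori $\limsup$ (the $\varepsilon$-multiplicity, which need not be a limit in general) to an honest limit. The subtlety is the uniform control, in $k$, of the higher cohomology error terms on $X$, and it is here that one cannot dispense with either characteristic zero (for resolution and for the vanishing theorems) or the isolated singularity hypothesis (which guarantees the relevant cohomology is of finite length, so that $k^d$ is the correct normalization and the count is a single local volume concentrated over $\mm$). That the resulting limit may be irrational, as in \cite{CHST}, reflects the fact that the section ring $\Dirsum_k\Gamma(X,\OO_X(-kZ))$ need not be finitely generated.
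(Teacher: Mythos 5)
Your first half coincides exactly with the paper's proof: the same short exact sequence splitting $\lambda(\widetilde{I^k}/I^k)$ into $\lambda(\overline{I^k}/I^k)+\lambda(\widetilde{I^k}/\overline{I^k})$, the same use of the isolated singularity hypothesis to get $\widetilde{I^k}=\overline{I^k}\:\mm^\infty\supseteq\overline{I^k}$, and the same disposal of the first summand via the finitely generated $\Dirsum_k I^k$-module $\Dirsum_k\overline{I^k}/I^k$ annihilated by a power of $\mm$ (the paper gets degree $\leq d-1$ where you claim only $\leq d$; either suffices). The genuine gap is exactly where you predict it: you never prove that $\lim_k\lambda(\widetilde{I^k}/\overline{I^k})/k^d$ exists, you only name the tools you hope will do it, and those tools do not apply as stated. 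Fujita approximation (\cite{F2}, or the corollary quoted in \cite{L} and \cite{CHST}) is a theorem about big line bundles on \emph{projective} varieties, whereas your $\OO_X(-Z)$ lives on a scheme that is merely proper over the local base $\Spec R$, and the quantities to be estimated are lengths of finite-length cohomology modules concentrated over $\mm$; there is no off-the-shelf relative Fujita approximation or local volume theory to invoke, and the graded case of \cite{CHST} does not transfer because there one has a projective variety available from the outset. Acknowledging this ``$\epsilon$-multiplicity, limsup-versus-limit'' obstacle is not the same as overcoming it.

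The missing idea is the paper's compactification construction. One embeds $\Spec R$ as the local ring of a closed point of a nonsingular projective variety $\overline{X}$ over a suitable subfield $K'$ with $R/\mm$ finite over $K'$, extends $I$ to an ideal sheaf, and builds a projective resolution $\overline{Y}\supset Y$ on which one manufactures two line bundles: a semiample and big ${\mathcal M}=\overline{f}^*({\mathcal A}^t)\otimes I{\mathcal O}_{\overline{Y}}$ (with ${\mathcal A}$ ample on $\overline{X}$, $t\gg 0$) and the big twist ${\mathcal B}={\mathcal M}\otimes{\mathcal O}_{\overline{Y}}(E)$, where $E$ is the part of the exceptional divisor lying over $\mm$ and $F$ is the residual reduced divisor, so $I{\mathcal O}_Y={\mathcal O}_Y(-E-F)$. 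The sequence $0\to{\mathcal M}^k\to{\mathcal B}^k\to{\mathcal O}_{kE}(-kF)\to 0$ then writes the finite-length term, up to an error $h^1(\overline{Y},{\mathcal M}^k)=o(k^d)$ coming from semiampleness, as the difference $h^0(\overline{Y},{\mathcal B}^k)-h^0(\overline{Y},{\mathcal M}^k)$ of section counts of honest line bundles on a projective variety; Fujita approximation applied to the big bundle ${\mathcal B}$, together with finite generation of the section ring of ${\mathcal M}$, then yields the limit. Two further points. First, your proposed use of Grauert--Riemenschneider or relative Kawamata--Viehweg to suppress the higher cohomology of ${\mathcal O}_X(-kZ)$ fails as stated: those theorems vanish adjoint bundles $K_X+L$, and the discrepancy $K_X$ cannot be absorbed into $-kZ$ for all large $k$ when the resolution contracts curves $C$ with $Z\cdot C=0$ but $K_X\cdot C>0$ (such curves occur whenever the blowup of $I$ has non-canonical singularities). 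The paper handles this error term algebraically instead, by showing $\Dirsum_k H^1(Y,{\mathcal L}^k)$ is a finitely generated module over $\Dirsum_k\overline{I^k}$ via the Leray spectral sequence through the normalized blowup, giving an $O(k^{d-1})$ bound. Second, the choice of the base field $K'$ (extracting a transcendence basis so that $R/\mm$ is finite over $K'$) is needed to convert $h^0$ over $K'$ into lengths over $R$; this bookkeeping is absent from your sketch but is harmless once the compactification exists.
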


\begin{proof} If $I$ is $\mm$-primary, then $\widetilde{I^k}=R$ for all $k$,
and thus $\lambda(\widetilde{I^k}/I^k)=\lambda(R/I^k)$ is a polynomial in $k$ of
degree $d$ for $k\gg0$. Thus the limit exists.

Now assume that $I$ is not $\mm$-primary. Since $R$ is regular and the singular locus of $\mbox{Spec}(R/I)$ is $\mm$, we have that $I_p^k$ is a complete ideal in $R_p$ for all $p\in\mbox{Spec}(R)-\{\mm\}$ and $k\ge 0$. Thus
$$
\widetilde{I^k}=I^k:\mm^{\infty}=\overline{I^k}:\mm^{\infty}\supset \overline{I^k}
$$
for all $k>0$.

Consider the exact sequence of finite length $R$-modules
$$
0\rightarrow \overline{I^k}/I^k\rightarrow \widetilde{I^k}/I^k\rightarrow \widetilde{I^k}/\overline{I^k}\rightarrow 0.
$$
Since $\Dirsum_{k\ge 0}\overline{I^k}$ is a finitely generated $\Dirsum_{k\ge 0}I^k$-module, the quotient
$\Dirsum_{k\ge 0}\overline {I^k}/I^k$ is a finitely generated $\Dirsum_{k\ge 0}I^k$ module, which is annihilated by $\mm^r$ for some $r$. Thus
$\lambda(\overline{I^k}/I^k)$ is a polynomial of degree $\le d-1$ for $k\gg 0$,
and we have reduced to showing that
$$
\lim_{k\rightarrow \infty}\frac{\lambda(\widetilde{I^k}/\overline{I^k})}{k^d}
$$
exists.

The blow up
$\mbox{Proj}(\Dirsum_{k\ge 0}I^k)$ of $I$ is nonsingular away from the fiber over the
maximal ideal $\mm$ of $R$. Let $Y\rightarrow \mbox{Proj}(\Dirsum_{k\ge 0}I^k)$
be a resolution of singularities which is an isomorphism away from the
fiber over the maximal ideal $\mm$ of $R$. Let $X=\mbox{Spec}(R)$, and $f:Y\rightarrow X$
be the natural map. Let ${\mathcal L}=I{\mathcal O}_Y$.
${\mathcal L}={\mathcal O}_Y(-F-E)$ where $E$ is an effective divisor such that $f(E)=\mm$, and $F$ is a
reduced effective divisor whose components are the prime divisors corresponding
to the $P_P$-adic valuations in $R_P$, where $P$ ranges over the minimal primes
of $I$.

For $k\in \NN$, we have exact sequences
$$
0\rightarrow {\mathcal O}_Y(-kE)\rightarrow{\mathcal O}_Y\rightarrow {\mathcal O}_{kE}\rightarrow 0.
$$
Tensoring with ${\mathcal O}_Y(-kF)$, we have exact sequences
$$
0\rightarrow {\mathcal L}^k\rightarrow {\mathcal O}_Y(-kF)\rightarrow {\mathcal O}_{kE}(-kF)\rightarrow 0,
$$
where ${\mathcal O}_{kE}(-kF)$ denotes the invertible sheaf ${\mathcal O}_{kE}\otimes{\mathcal O}_Y(-kF)$ on the scheme $kE$.
Taking global sections, we have  exact sequences
\begin{equation}\label{eq2}
0\rightarrow \overline{I^k}\rightarrow \widetilde{I^k}\rightarrow H^0(kE,{\mathcal O}_{kE}(-kF))
\rightarrow H^1(Y,{\mathcal L}^k).
\end{equation}

Let $N=\Dirsum_{k\ge 0}H^1(Y,{\mathcal L}^k)$. Then $N$ is naturally a $\Dirsum_{k\ge0}\overline{I^k}\cong \Dirsum_{k\ge 0}H^0(Y,{\mathcal L}^k)$ module. We will show that $N$ is a
finitely generated $\Dirsum_{k\ge 0}\overline{I^k}$-module.

Let $Z=\mbox{Proj}(\Dirsum_{\ge 0}\overline{I^k})$, and ${\mathcal N}=I{\mathcal O}_Z$. Since $Y$ is normal and dominates the blowup of $I$, the map $f:Y\rightarrow X$
 factors as
$$
Y\stackrel{g}{\rightarrow} Z\stackrel{h}{\rightarrow} X.
$$
From the first terms of the Leray spectral sequence, we have an exact sequence
\begin{equation}\label{eq4}
0\rightarrow H^1(Z,g_*({\mathcal L}^k))\rightarrow H^1(Y,{\mathcal L}^k)
\rightarrow H^0(Z,R^1g_*({\mathcal L}^k)).
\end{equation}
We have $g_*({\mathcal L}^k)\cong {\mathcal N}^k$  and
$R^1g_*({\mathcal L}^k)\cong {\mathcal N}^k\otimes R^1g_*{\mathcal O}_Y$
since $Z$ is normal, and by the projection formula.
Since ${\mathcal N}$ is ample on $Z$, there exists $k_0$ such that
$H^1(Z,g_*({\mathcal L}^k))=0$ for $k\ge k_0$. Since $h$ is proper,
$H^1(Z,{\mathcal N}^k)$ is a finitely generated $R$-module for all $k$. Thus
$\Dirsum_{k\ge 0}H^1(Z,g_*({\mathcal L}^k))$ is a finitely generated
$\Dirsum_{k\ge 0}\overline{I^k}=\Dirsum_{k\ge 0}H^0(Z,{\mathcal N}^k)$-module.
Since $g$ is proper, $R^1g_*{\mathcal O}_Y$ is a coherent ${\mathcal O}_Z$
module. Since $\mathcal N$ is ample, there exists $s\in\NN$ and $a_i\in\ZZ$ such that there is a surjection
$$
\Dirsum_{i=1}^s{\mathcal N}^{a_i}\rightarrow R^1g_*{\mathcal O}_Y
$$
of ${\mathcal O}_Z$ modules. Let ${\mathcal K}$ be the kernel of this map. We have an exact sequence
$$
\Dirsum_{n\ge 0}(\Dirsum_{i=1}^sH^0(Z,{\mathcal N}^{n+a_i}))\rightarrow \Dirsum_{n\ge 0}H^0(Z,{\mathcal N}^n\otimes R^1g_*{\mathcal O}_Y)\rightarrow \Dirsum_{n\ge 0}H^1(Z,{\mathcal K}\otimes{\mathcal N}^n).
$$
Since ${\mathcal N}$ is ample, there exists an $n_0$ such that
$H^1(Z,{\mathcal K}\otimes{\mathcal N}^n)=0$ for $n\ge n_0$. Since $R$ is normal,
we have
$$
H^0(Z,{\mathcal N}^i)=\left\{\begin{array}{ll}
R&\mbox{ if }i\le 0\\
\overline{I^i}&\mbox{ if }i>0.
\end{array}\right.
$$
Thus $\Dirsum_{n\ge 0}(\Dirsum_{i=1}^s H^0(Z,{\mathcal N}^{n+a_i})$
is a finitely generated $\Dirsum_{n\ge 0}H^0(Z,{\mathcal N}^n)$ module.
Since $H^1(Z,{\mathcal K}\otimes{\mathcal N}^n)$ are finitely generated $R$-modules for all $n$, which are zero for $n\ge n_0$, it follows that
$\Dirsum_{n\ge 0}H^0(Z,{\mathcal N}^n\otimes R^1g_*{\mathcal O}_Y)$ is a finitely generated $\Dirsum_{n\ge0}H^0(Z,{\mathcal N}^n)$ module.
From (\ref{eq4}), we see that $N$ is a finitely generated $\Dirsum_{k\ge 0}\overline{I^k}$-module.

Since $N$ is a finitely generated $\Dirsum_{k\ge 0}\overline{I^k}$-module,
and the support of $H^1(Y,{\mathcal L}^k)$ is contained in $\{\mm\}$ for all $k$,
there exists a positive integer $r$ such that $\mm^rN=0$. Since
$$
\dim(\Dirsum_{k\ge 0} \overline{I^k})/\mm(\Dirsum_{k\ge 0}\overline I^k)
\le \mbox{dim }R=d,
$$
there exists a constant $c$ such that
$\lambda(H^1(Y,{\mathcal L}^k))\le ck^{d-1}$ for all $k$. From comparison with (\ref{eq2}),
we have reduced to showing that
\begin{equation}\label{eq5}
\lim_{k\rightarrow \infty}\frac{\lambda(H^0(kE,{\mathcal O}_{kE}(-kF)))}{k^d}
\end{equation}
 exists.

If $R/\mm$ is algebraic over $K$, let $K'=K$. If $R/\mm$ is transcendental over $K$, let $t_1,\ldots,t_r$ be a lift of a transcendence basis of $R/\mm$ over $K$ to $R$. The rational function field $K(t_1,\ldots,t_r)$ is contained in $R$.
Let $K'=K(t_1,\ldots,t_r)$. We have that
$R/\mm$ is finite algebraic over $K'$. There exists a nonsingular affine $K'$-variety $U$ such that $R$ is the local ring of a closed point $\alpha$ of $U$.
Let $\overline X$ be a nonsingular projective closure of $U$, and let
${\mathcal I}$ be an extension of $I$ to an ideal  sheaf on $\overline X$.
Let $\overline f:\overline Y\rightarrow \overline X$ be a resolution of singularities such that $\overline Y\rightarrow \overline X$ factors through the blow up of ${\mathcal I}$, and $\overline f^{-1}(X)\cong Y$. We may identify $E$
(and $kE$ for all positive integers $k$) with a closed subscheme of $\overline Y$. Let $\overline F$ be the Zariski closure of
$F$ in $\overline Y$. Since the singular locus of $\mbox{Spec}(R/I)$ is $\mm$,
we may choose $U$, $\overline X$ and $\overline Y$ so that the singular locus of the scheme
$\mbox{Spec}({\mathcal O}_{\overline X}/{\mathcal I})$  is the isolated point $\alpha$, and $\overline f_*{\mathcal O}_Y(-\overline F)\cong {\mathcal I}_{\beta}$ for $\beta\in\overline X-\{\alpha\}$.

There exists a line bundle $\mathcal M$ on $\overline Y$ such that
$\mathcal M\otimes{\mathcal O}_Y\cong{\mathcal L}$ is generated by global sections and is big. We can construct $\mathcal M$ by taking any ample line
bundle $\mathcal A$ on $\overline X$, and taking
$$
{\mathcal M}=\overline f^*({\mathcal A}^t)\otimes {\mathcal I}{\mathcal O}_{\overline Y}
$$
for
$t$ sufficiently large. Let ${\mathcal B}={\mathcal M}\otimes{\mathcal O}_Y(E)$. We have an exact sequence
$$
0\rightarrow {\mathcal O}_{\overline Y}(-kE)\rightarrow {\mathcal O}_{\overline Y}\rightarrow {\mathcal O}_{kE}\rightarrow 0.
$$
Tensoring with ${\mathcal B}^k$, we have  exact sequences
$$
0\rightarrow {\mathcal M}^k\rightarrow {\mathcal B}^k\rightarrow {\mathcal O}_{kE}(-kF)\rightarrow 0.
$$
Taking global sections, we have exact sequences
\begin{equation}\label{eq3}
0\rightarrow H^0(\overline Y,{\mathcal M}^k)\rightarrow H^0(\overline Y,{\mathcal B}^k)\rightarrow H^0(kE,{\mathcal O}_{kE}(-kF))
\rightarrow H^1(\overline Y,{\mathcal M}^k).
\end{equation}

Since ${\mathcal M}$ is semiample (generated by global sections and big), we have that
$$
\lim_{k\rightarrow\infty}\frac{h^1(\overline Y,{\mathcal M}^k)}{k^d}=0
$$
(for instance as a special case of [F1] or by consideration of the Leray spectral sequence of the mapping from $\overline Y$ given by  the global sections of a high power of ${\mathcal M}$). Further,
$\Dirsum_{n\ge 0}H^0(\overline Y,{\mathcal M}^k)$ is a finitely generated $K'$-algebra of dimension $d+1$. Thus

$$
\lim_{k\rightarrow\infty}\frac{h^0(\overline Y,{\mathcal M}^k)}{k^d}\in \QQ
$$
exists.
Since ${\mathcal B}$ is big,
by the  corollary given in [L] or [CHST] to [F2], we have that
$$
\lim_{k\rightarrow\infty}\frac{h^0(\overline Y,{\mathcal B}^k)}{k^d}\in\RR
$$
exists. From the sequence (\ref{eq3}), we see that
$$
\lim_{k\rightarrow\infty}\frac{h^0(kE,{\mathcal O}_{kE}(-kF))}{k^d}\in\RR,
$$
and the conclusions of the theorem now follow, by applying the formula
$$
h^0(kE,{\mathcal O}_{kE}(-kF))=\mbox{dim}_{K'}H^0(kE,{\mathcal O}_{kE}(-kF))
=[R/\mm:K']\lambda(H^0(kE,{\mathcal O}_{kE}(-kF)))
$$
to equation (\ref{eq5}).
\end{proof}

\begin{Corollary} Suppose that $R=K[[x_1,\ldots,x_n]]$ is a power series ring
over a field $K$ of characteristic zero, and $I\subset R$ is an equidimensional ideal such that the singular locus of $\Spec(R/I)$ is $\mm=(x_1,\ldots,x_n)$.  Then the limit
$$
\lim_{k\rightarrow\infty} \frac{\lambda(\widetilde{I^k}/I^k)}{k^d}\in\RR
$$
exists.
\end{Corollary}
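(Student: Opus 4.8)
The plan is to reduce the statement to Theorem~\ref{nice}. The only hypothesis of that theorem which $R=K[[x_1,\dots,x_n]]$ fails is that it is not essentially of finite type over $K$. It is, however, the completion of the regular local ring $S=K[x_1,\dots,x_n]_{(x_1,\dots,x_n)}$, which \emph{is} essentially of finite type over $K$, has dimension $d=n$, and satisfies $\hat S=R$. So it suffices to produce an ideal $A\subset S$ for which the length sequences $\lambda(\widetilde{I^k}/I^k)$ and $\lambda(\widetilde{A^k}/A^k)$ coincide, and to verify that $(S,A)$ meets the hypotheses of Theorem~\ref{nice}.

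First I would construct $A$ by descent. Since the singular locus of $\Spec(R/I)$ is $\{\mm\}$ and $\chara K=0$, the singularity is finitely determined; equivalently, by Artin's algebraization of isolated singularities, $R/I$ is the completion of a local ring essentially of finite type over $K$. Concretely, choosing generators $f_1,\dots,f_m$ of $I$ and replacing them by their polynomial truncations $g_1,\dots,g_m$ (discarding all terms of order $>N$ for $N\gg0$) yields $A=(g_1,\dots,g_m)\subset S$ together with an automorphism $\varphi$ of $R$ carrying $AR$ to $I$. Automorphisms of $R$ fix $\mm$ and commute with the formation of powers, saturations and lengths, so we may assume outright that $AR=I$. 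Because $S/A$ is excellent and the regular locus and dimension are compatible with completion, the equidimensionality of $R/I$ and the fact that its singular locus is $\{\mm\}$ descend: $S/A$ is equidimensional with singular locus exactly $\{\mm_S\}$, so Theorem~\ref{nice} applies to $(S,A)$.

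Next I would match the two length sequences by flat base change. The map $S\to R$ is faithfully flat with trivial residue field extension $S/\mm_S=R/\mm=K$. Flatness gives $A^kR=I^k$, and since each saturation stabilizes, say $\widetilde{A^k}=A^k:\mm_S^{M}$ for some $M$, the compatibility of colons with flat extensions yields
\[
\widetilde{A^k}\,R=(A^k:\mm_S^{M})R=(A^kR):\mm_R^{M}=\widetilde{I^k}.
\]
Hence $\widetilde{I^k}/I^k\cong(\widetilde{A^k}/A^k)\otimes_S R$. As $\widetilde{A^k}/A^k$ is a finite-length $S$-module supported at $\mm_S$, its length is unchanged by completion, so $\lambda(\widetilde{I^k}/I^k)=\lambda(\widetilde{A^k}/A^k)$ for every $k$.

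Finally, Theorem~\ref{nice} applied to the regular local ring $S$, essentially of finite type over the characteristic-zero field $K$ of dimension $d=n$, and to the ideal $A$ with $\Spec(S/A)$ having singular locus $\{\mm_S\}$, shows that $\lim_{k\to\infty}\lambda(\widetilde{A^k}/A^k)/k^d$ exists in $\RR$; by the equality of the two sequences this is exactly the desired limit. The one genuinely nontrivial step is the descent in the second paragraph—realizing $I$ as the extension of an ideal from the essentially-of-finite-type ring $S$ via algebraization/finite determinacy of the isolated singularity—since everything afterward is formal flat base change.
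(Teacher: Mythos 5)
Your proposal is correct and follows essentially the same route as the paper: the paper also algebraizes the pair $(R,I)$ --- citing Hironaka's equivalence-of-singularities theorem or the Cutkosky--Srinivasan criterion (the precise form of the finite-determinacy/Artin-type statement you invoke, and the place where the equidimensionality hypothesis is actually used) --- to obtain a polynomial ideal $J$ and an automorphism $\phi$ of $R$ with $\phi(I)=JR$, then transfers $\lambda(\widetilde{I^k}/I^k)=\lambda(\widetilde{J^k}/J^k)$ by flat base change and applies Theorem~\ref{nice}. The only difference is attribution of the algebraization step; the structure and all subsequent arguments coincide with the paper's proof.
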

\begin{proof}
By \cite[Theorem 1]{H} or \cite[Theorem A]{CS}, there exists an ideal $J\subset K[x_1,\ldots,x_n]$ and a $K$-algebra isomorphism $\phi:R\rightarrow R$ such
that $\phi(I)=JR$. Thus $\phi(I^k)=J^kR$ and $\phi(\widetilde{I^k})=\widetilde{J^k}R=\widetilde{J^kR}$ for all $k$. We have that
$$
\lambda(\widetilde{I^k} /I^k)=\lambda(\widetilde{J^k}R/J^kR)=\lambda(\widetilde{J^k}/J^k)
$$
for all $k\in\NN$. Now by Theorem \ref{nice},
$$
\lim_{k\rightarrow\infty} \frac{\lambda(\widetilde{I^k}/I^k)}{k^d}
=\lim_{k\rightarrow\infty} \frac{\lambda(\widetilde{J^k}/J^k)}{k^d}
\in\RR.
$$
\end{proof}

\section{Form ideals of powers of complete intersections}

Let $(R,\mm)$ be a Noetherian local ring, $I\subset \mm$ an ideal. For any local ring $(S,\nn)$ we denote by $G(S)=\Dirsum_{k\geq 0}\nn^k/\nn^{k+1}$ the associated graded ring  of $S$. The canonical epimorphism $R\to R/I$ induces an epimorphism $G(R)\to G(R/I)$ whose kernel we denote by $I^*$. The graded ideal $I^*$ is called the form ideal of $I$.  If $f\in R$ and $d$ is the maximal number such that $f\in\mm^d$, then we set $f^*=f+\mm^{d+1}$ and call it the leading form of $f$. The leading forms $f^*$ with $f\in I$ generate $I^*$. Any system of generators $f_1,\ldots,f_m$ of $I$ such that $f_1^*,\ldots,f_m^*$ generates $I^*$ is called a standard basis of $I$. A standard basis of $I$ is a system of generators of $I$, but  is usually not a minimal system of generators.

\medskip
The following lemma is well-known. For the convenience of the reader we give a sketch of its proof.

\begin{Lemma}
\label{lifting}
Let $(R,\mm)$ be a local ring such that $G(R)$ is domain, and  $I\subset R$ an ideal. Let $f_1,\ldots, f_m$ be a system of generators of $I$. Then $f_1,\ldots, f_m$ is a standard basis of $I$ if  all relations of $f_1^*,\ldots, f_m^*$ can be lifted. In other words, whenever there is  a homogeneous relation
\[
g_1f_1^*+g_2f_2^*+\cdots +g_mf_m^*=0,
\]
with $g_i\in G(R)$,
then there exist $h_i\in R$ with $g_i=h_i^*$ for $i=1,\ldots,m$ such that
\[
h_1f_1+h_2f_2+\cdots +h_mf_m=0.
\]
Moreover it is sufficient to test the lifting property for a system of homogeneous generators of the  relation module of $f_1^*,\ldots, f_m^*$.
\end{Lemma}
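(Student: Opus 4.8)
The plan is to show directly that the leading forms $f_1^*,\ldots,f_m^*$ generate the form ideal $I^*$. Set $J=(f_1^*,\ldots,f_m^*)\subseteq G(R)$. Since each $f_i\in I$ we have $J\subseteq I^*$, so it remains to prove the reverse inclusion, i.e.\ that $f^*\in J$ for every nonzero $f\in I$. Throughout I would use the order function $\ord(g)=\sup\{k:g\in\mm^k\}$; because $G(R)$ is a domain, $R$ is a domain and $\ord$ is additive, $\ord(gh)=\ord(g)+\ord(h)$, equivalently $(gh)^*=g^*h^*$ for nonzero $g,h$. This multiplicativity of leading forms is the feature that makes the whole argument run.

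First I would treat the main implication. Fix $f\in I$ and write $f=\sum_{i=1}^m a_if_i$ with $a_i\in R$. Put $c=\min_i\ord(a_if_i)=\min_i(\ord(a_i)+\ord(f_i))$ and let $S=\{i:\ord(a_if_i)=c\}$. If $\ord(f)=c$, then reducing modulo $\mm^{c+1}$ kills all terms with $i\notin S$ and leaves $f^*=\sum_{i\in S}a_i^*f_i^*\in J$, as wanted. If instead $\ord(f)>c$, the degree-$c$ parts must cancel, giving a homogeneous relation $\sum_{i\in S}a_i^*f_i^*=0$ (set the remaining coefficients to $0$). By hypothesis this relation lifts: there are $h_i\in R$ with $h_i^*=a_i^*$ for $i\in S$ and $h_i=0$ otherwise, such that $\sum_i h_if_i=0$. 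Replacing $a_i$ by $a_i-h_i$ leaves $f=\sum_i(a_i-h_i)f_i$ unchanged, while for $i\in S$ the leading terms cancel so $\ord((a_i-h_i)f_i)>c$, and for $i\notin S$ the order was already $>c$. Thus the new representation has strictly larger $c$. Since $\ord(f)$ is a fixed upper bound for $c$ and $c$ is an integer that strictly increases at each step, after finitely many steps we land in the first case and conclude $f^*\in J$.

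For the ``moreover'' statement I would show that liftability of a homogeneous generating set $r_1,\ldots,r_s$ of the syzygy module $\Syz(f_1^*,\ldots,f_m^*)$ forces liftability of every relation. Write $r_j=(g_{j1},\ldots,g_{jm})$, homogeneous of degree $d_j$, and let $(h_{j1},\ldots,h_{jm})$ be a chosen lifting, so $h_{ji}^*=g_{ji}$ and $\sum_i h_{ji}f_i=0$. Given an arbitrary homogeneous relation $r=(g_1,\ldots,g_m)$ of degree $d$, write $r=\sum_j p_jr_j$ with $p_j\in G(R)$ homogeneous of degree $d-d_j$, so $g_i=\sum_j p_jg_{ji}$. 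Lift each $p_j$ to $q_j\in R$ with $q_j^*=p_j$ and set $h_i=\sum_j q_jh_{ji}$. Then $\sum_i h_if_i=\sum_j q_j\bigl(\sum_i h_{ji}f_i\bigr)=0$ automatically. The substance is the order bookkeeping: every nonzero summand $q_jh_{ji}$ has order $\deg(p_j)+\deg(g_{ji})=d-\deg f_i^*$, independent of $j$, so by additivity of $\ord$ the degree-$(d-\deg f_i^*)$ component of $h_i$ is exactly $\sum_j p_jg_{ji}=g_i$, giving $h_i^*=g_i$.

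The step I expect to be the main obstacle is the cancellation handling in the main implication together with its termination: one must be sure that after subtracting the lifted relation the order genuinely increases and that no infinite regress occurs, which rests entirely on the additivity $\ord(gh)=\ord(g)+\ord(h)$ and on $\ord(f)$ providing a finite ceiling. A secondary delicate point is the bookkeeping for components with $g_i=0$ in the ``moreover'' part, where one must verify that $h_i$ lands in sufficiently high order rather than acquiring a spurious leading form; again this follows from the domain hypothesis, which forces all relevant summands to share the same order.
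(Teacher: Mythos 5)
Your proof is correct and follows essentially the same route as the paper's: the same successive-improvement argument for the main implication (cancellation of the minimal-order parts gives a homogeneous relation, lift it, subtract, and the order strictly increases toward the ceiling $\ord(f)$, all resting on multiplicativity of leading forms in the domain $G(R)$), and the same degree bookkeeping for reducing to a generating set of the syzygy module. If anything, your treatment of the components with $g_i=0$ in the ``moreover'' part is more explicit than the paper's, which simply asserts that $h=\sum_i c_ih_i$ is a lifting.
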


\begin{proof}
Let $f\in I$. Then $f=\sum_{i=1}^nc_if_i$ with $c_i\in R$. Let $d_0=\min\{\deg(c_if_i)^* \: i=1,\ldots,n\}$. It is clear that $d_0\leq d=\deg f^*$. Assume that $d_0<d$, and let ${\mathcal I}$ be the set of integers $i$ with $\deg (c_if_i)^*=d_0$. Since $G(R)$ is a domain, it follows that $(c_if_i)^*=c_i^*f_i^*$ for all $i$, and since $d_0<d$, we see that $\sum_{i\in\mathcal I}c_i^*f_i^*=0$. By the lifting property there exist $h_i\in R$ with $h_i^*=c_i^*$ for all $i\in \mathcal I$ and such that $\sum_{i\in \mathcal I}h_if_i=0$. Thus we get a new presentation $f=\sum_{i=1}^nc_i'f_i$,  where $c_i'=c_i-h_i$ for $i\in{\mathcal I}$, and $c_i'=c_i$ for $i\not\in {\mathcal I}$. Since $\deg (c_i')^*>\deg c_i^*$ for $i\in {\mathcal I}$, we conclude that $\min\{\deg(c_i'f_i)^* \: i=1,\ldots,n\}>d_0$. Thus in a finite number of steps we arrive at a presentation $f=\sum_{i=1}^nb_if_i$ with $\deg (b_if_i)^*\geq d$ for all $i$. Then $f^*=\sum_{i\in {\mathcal J}} b_i^*f_i^*$ where $\mathcal J=\{i\: \deg (b_if_i)^*= d\}$

Let $g_i=(g_{i1},\ldots,g_{in})$, $i=1,\ldots,m$ be a system of homogeneous generators of  the relation module of $f_1^*,\ldots, f_m^*$ with the property that each $g_i$ can be lifted to a relation $h_i=(h_{i1},\ldots,h_{in})$ of $f_1,\ldots, f_m$, and let $g$ be an arbitrary homogeneous relation of $f_1^*,\ldots, f_m^*$. Then there exist elements $c_i\in R$ such that $g=\sum_{i=1}^mc_i^*g_i$ and with the property that $\deg c_i^*+\deg g_i=\deg g$. Recall that $\deg g_i=\deg g_{ij}+\deg f_j^*$ for all $j$. Keeping this in mind one sees that $h=\sum_{i=1}^mc_ih_i$ is a lifting of $g$.
\end{proof}

We are interested in the algebra $A(I)=\Dirsum_{k\geq 0}(I^k)^*$. If this  algebra happens to be finitely generated, then for $k\gg 0$ the  coefficients of the Hilbert polynomials  $P_{R/I^k}(t)$ are quasi-polynomials as functions of $k$.

For the formulation of the next result we need the following definition: let $A$ be graded $K$-algebra and $J\subset A$ a graded ideal. We say that $J$ is liftable, if there exists a graded ideal $\tilde{J}\subset A[t_1,\ldots,t_r]$ in a polynomial ring  extension $A[t_1,\ldots,t_r]$ of $A$ with $\deg t_i>0$ satisfying the following properties: (i) $\tilde{J}$  is generically a complete intersection, (ii)   $t_1,\ldots,t_r$ is a regular sequence on $A[t_1,\ldots,t_r]/\tilde{J}$,  and  (iii) $A[t_1,\ldots,t_r]/\tilde{J}$ modulo $(t_1,\ldots,t_r)$ is isomorphic to $A/J$. If $J$ is liftable to $\tilde{J}$, then $J$ is called a specialization of $\tilde{J}$.

\begin{Theorem}
\label{complete}
Let $(R,\mm)$ be a local ring such that $G(R)$ is a domain, and $I\subset R$ an ideal. Then
$A(I)=\Dirsum_{k\geq 0}(I^k)^*$ is standard graded, if $I^*$ is a complete intersection.

Conversely, suppose that $G(R)$ is Cohen-Macaulay and  $I\subset R$ is a complete intersection ideal, satisfying:
\begin{enumerate}
\item[{\em (i)}] $\dim R/I=0$,
\item[{\em (ii)}] $(I^*)^k=(I^k)^*$ for infinitely many  $k>1$ (for example, if $A(I)$ is standard graded),
\item[{\em (iii)}]$I^*$ is liftable.
\end{enumerate}
Then $I^*$ is a complete intersection.
\end{Theorem}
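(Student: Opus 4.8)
The plan is to dispatch the two implications separately, concentrating on the converse. Write $G=G(R)$, a Cohen--Macaulay standard graded domain with $\dim G=\dim R=:c$; since $I$ is a complete intersection with $\dim R/I=0$, its minimal generators $f_1,\dots,f_c$ form a regular sequence, $c=\height I$, and $R$ is Cohen--Macaulay (because $G$ is). Passing to associated graded rings gives $G/I^*\iso G(R/I)$, so $I^*$ is primary to the graded maximal ideal $\mm_G$ and $\height I^*=c$; moreover, as $G$ is a domain, $(fg)^*=f^*g^*$ and hence $(I^*)^k\subseteq (I^k)^*$ for all $k$. For the easy forward direction, note that if $I^*$ is a complete intersection then its minimal homogeneous generators form a regular sequence and may be chosen to be leading forms $f_1^*,\dots,f_c^*$ with $f_i\in I$; then $f_1,\dots,f_c$ is a standard basis of $I$ by Lemma~\ref{lifting} (the relations among the $f_i^*$ are Koszul relations, which lift), and the regular-sequence property of the $f_i^*$ forces $(I^k)^*=(I^*)^k$ for every $k$, i.e.\ $A(I)$ is standard graded.

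For the converse the first step is to convert hypothesis (ii) into a numerical identity. Because $I$ is a parameter complete intersection in the Cohen--Macaulay ring $R$, one has $\gr_I(R)\iso (R/I)[T_1,\dots,T_c]$, whence $\lambda(R/I^k)=\lambda(R/I)\binom{k+c-1}{c}$ for all $k\ge 1$. For each $k$ satisfying (ii) we have $(I^*)^k=(I^k)^*$, so
\[
\lambda\!\left(G/(I^*)^k\right)=\lambda\!\left(G/(I^k)^*\right)=\lambda\!\left(G(R/I^k)\right)=\lambda(R/I)\binom{k+c-1}{c}.
\]
Since $I^*$ is $\mm_G$-primary, the left-hand side agrees for large $k$ with a polynomial in $k$ of degree $c$ and leading coefficient $e(I^*;G)/c!$; as (ii) holds for arbitrarily large $k$, this polynomial equals $\lambda(R/I)\binom{k+c-1}{c}$, and comparing leading coefficients yields $e(I^*;G)=\lambda(R/I)=\lambda(G/I^*)$.

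The second step reads off the complete intersection property from the equality $\lambda(G/I^*)=e(I^*;G)$. In a Cohen--Macaulay (graded-)local ring a minimal reduction $\qq\subseteq I^*$ is generated by a regular sequence, so $\lambda(G/\qq)=e(\qq;G)=e(I^*;G)$; here one may pass to $G\tensor_{R/\mm}(R/\mm)(u)$ if the residue field is finite, so that such a reduction exists. Combining with the previous identity gives $\lambda(G/\qq)=\lambda(G/I^*)$, and since $\qq\subseteq I^*$ this forces $I^*=\qq$. Thus $I^*$ is a parameter ideal, i.e.\ generated by a regular sequence, and is therefore a complete intersection.

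The delicate point is the first step: one must know that $\lambda(R/I^k)$ is given exactly by the complete-intersection binomial, and that it is legitimate to compare it with the Hilbert--Samuel polynomial of $I^*$, which is where the hypotheses $\dim R/I=0$ and the Cohen--Macaulayness of $G$ (hence of $R$) are essential. The route above extracts the conclusion through the minimal-reduction inequality $\lambda(G/\aa)\le e(\aa)$, valid for $\mm_G$-primary $\aa$ in a Cohen--Macaulay ring, with equality exactly for parameter ideals. I expect hypothesis (iii) to supply the alternative handle favored by the authors: lifting $I^*$ to a generically complete intersection $\tilde J$ and computing $e(I^*;G)$ as the product of the degrees of a regular sequence --- the multiplicity of a generic complete intersection --- so that matching this product against $\lambda(R/I)$ again pins down the complete-intersection property. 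Either way the crux is the identification of $e(I^*;G)$ with $\lambda(G/I^*)$.
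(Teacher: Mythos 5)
Your forward direction is essentially the paper's own argument: pick $f_i\in I$ whose leading forms give the regular sequence generating $I^*$, and lift the Koszul-type relations among monomials in these forms via Lemma~\ref{lifting}. The one point to be precise about is that Lemma~\ref{lifting} presupposes that $f_1,\dots,f_m$ already generate $I$; this is the standard fact that elements of $I$ whose leading forms generate $I^*$ generate $I$, which the paper asserts directly and you should too (also note the statement allows $I^*$ of any codimension, not just $\dim R$).

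Your converse, however, takes a genuinely different route from the paper, and I find it correct. The paper uses hypothesis (iii) essentially: it lifts $J=I^*$ to a generically complete intersection ideal $\tilde J\subset G(R)[t_1,\dots,t_r]$, shows via the associativity formula that $e(G(R)[t_1,\dots,t_r]/\tilde J^k)$ has the binomial growth that (i) and (ii) force on $\lambda(G(R)/J^k)$, concludes from the resulting equality of multiplicity and length that $G(R)[t_1,\dots,t_r]/\tilde J^k$ is Cohen--Macaulay for infinitely many $k$ (\cite[Corollary 4.7.11]{BH}), and then invokes Cowsik--Nori and Waldi to get that $\tilde J$, hence $I^*$, is a complete intersection. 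You instead convert (i) and (ii) into an identity of Hilbert--Samuel polynomials, obtaining $e(I^*;G)=\lambda(G/I^*)$, and then finish with the classical minimal-reduction criterion: in a Cohen--Macaulay local ring with infinite residue field (achievable by the flat base change you indicate), a minimal reduction $\qq\subseteq I^*$ is a parameter ideal with $\lambda(G/\qq)=e(\qq)=e(I^*;G)=\lambda(G/I^*)$, forcing $\qq=I^*$. Both load-bearing steps are standard: two integer-valued polynomials agreeing at infinitely many arguments coincide, and reductions preserve multiplicity while parameter ideals in Cohen--Macaulay rings have colength equal to multiplicity. What your route buys is substantial: it never uses hypothesis (iii), so it shows that (i) and (ii) alone force $I^*$ to be a complete intersection; this would render the liftability hypotheses in Corollary~\ref{application} (that $R$ be $2$-dimensional or $I^*$ monomial) superfluous and would answer affirmatively the question the authors raise after that corollary. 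The structural reason the paper passes to a lift is that its Cohen--Macaulay criterion is vacuous in dimension zero (every Artinian quotient is Cohen--Macaulay, so nothing can be read off from $G/J^k$ itself); you sidestep this entirely by measuring the ideal $I^*$ with its multiplicity rather than measuring quotients by powers of a lift with the Cohen--Macaulay property. Since your argument strengthens the published theorem, those two steps are exactly where a referee should focus, but I see no gap in them.
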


\begin{proof}
Let $g_1,\ldots,g_m$ be the regular sequence generating $I^*$, and let $f_1,\ldots, f_m\in I$ with $f_i^*=g_i$ for $i=1,\ldots,m$. Then $f_1,\ldots,f_m$ is a regular sequence generating $I$, and in particular it is a standard basis of $I$. Now fix an integer $k>1$. We claim that the monomials $f^a=f_1^{a_1}f_2^{a_2}\cdots f_m^{a_m}$ in $f_1,\ldots,f_m$ of degree $k$ form a standard basis of $I^k$. This will then imply that $(I^k)^*=(I^*)^k$ for all $k$, so that $A(I)$ is standard graded.

In order to see that the monomials $f^a$ of degree $k$ form indeed a standard basis of $I^k$ we just need to show that all generating relations of the ideal generated  by the  leading forms of the elements $(f^a)^*$ with $a\in \NN^m$ and   $|a|=k$ can be lifted. Observe that $(f^a)^*=g_1^{a_1}g_2^{a_2}\cdots g_m^{a_m}$. Since $g_1,\ldots,g_m$ is a regular sequence, the relation module of $(g_1,\ldots,g_m)^k$ is generated by  relations of the form
\[
g_j(g_1^{a_1}g_2^{a_2}\cdots g_i^{a_i+1}\cdots g_m^{a_m})-g_i(g_1^{a_1}g_2^{a_2}\cdots g_j^{a_j+1}\cdots g_m^{a_m}).
\]
These relations can obviously all be lifted.

For the second part of the theorem let $J=I^*$. Since $I$ is a complete intersection ideal with $\dim R/I=0$, (ii) implies that for infinitely many  integers $k>1$ we have
\begin{eqnarray}
\label{kpower}
\lambda(G(R)/J^k)&=&\lambda(G(R)/(I^k)^*)=\lambda(R/I^k)\\
&=&{d+k-1 \choose k-1}\lambda(R/I)={d+k-1 \choose k-1}\lambda(G(R)/J),\nonumber
\end{eqnarray}
where $d=\dim G(R)$.

Let $\tilde{J}\subset G(R)[t_1,\ldots,t_r]$ be a lifting of $J$.  Since $\tilde{J}$ is generically a complete intersection, the associativity formula for multiplicities (\cite[Corollary 4.7.8]{BH})  implies that
\begin{eqnarray}
\label{lifted}
e(G(R)[t_1,\ldots,t_r]/\tilde{J}^k)={d+k-1 \choose k-1}e(G(R)[t_1,\ldots,t_r]/\tilde{J}).
\end{eqnarray}
Since  $e(G(R)[t_1,\ldots,t_r]/\tilde{J})=\lambda(G(R)/J)$, the equations (\ref{kpower}) and (\ref{lifted}) imply that $$e(G(R)[t_1,\ldots,t_r]/\tilde{J}^k)=\lambda(G(R)/J^k)$$ for infinitely many $k>1$ which in turn implies that $G(R)[t_1,\ldots,t_r]/\tilde{J}^k$ is Cohen--Macaulay, see
\cite[Corollary 4.7.11]{BH}. Thus by a result of Cowsik and Nori \cite{CN} and its  generalization by Waldi \cite[Korollar 1]{W} it follows that $\tilde{J}$ is a complete intersection.  Hence $I^*=J$ is a complete intersection as well.
\end{proof}

\begin{Corollary}
\label{application}
Let $(R,\mm)$ be a regular local ring and  $I\subset R$ be a complete intersection ideal with $\dim R/I=0$. Assume further that either  $(R,\mm)$ is 2-dimensional or $I^*$ is a monomial ideal. Then the following conditions are equivalent:
\begin{enumerate}
\item[{\em (a)}] $A(I)$ is standard graded;
\item[{\em (b)}] $I^*$ is a complete intersection;
\item[{\em (c)}] for infinitely many integers  $k$ we have  $(I^k)^*=(I^*)^k$.
\end{enumerate}
\end{Corollary}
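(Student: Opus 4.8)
The plan is to close the cycle of implications (b) $\Rightarrow$ (a) $\Rightarrow$ (c) $\Rightarrow$ (b), feeding the two substantial steps into Theorem~\ref{complete}. The first observation is that since $(R,\mm)$ is regular local, the associated graded ring $G(R)$ is a polynomial ring $K[x_1,\ldots,x_d]$ over $K=R/\mm$, with $d=\dim R$; in particular $G(R)$ is a Cohen--Macaulay domain, and because $\dim R/I=0$ the form ideal $I^*$ is $\mm$-primary of height $d$, hence a grade-$d$ perfect ideal of $G(R)$. Granting this, the implication (b) $\Rightarrow$ (a) is immediate from the first part of Theorem~\ref{complete}, whose sole hypothesis is that $G(R)$ be a domain. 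The implication (a) $\Rightarrow$ (c) is purely formal: if $A(I)$ is standard graded then $(I^k)^*=(I^*)^k$ for \emph{all} $k$, so certainly for infinitely many.

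The content therefore lies in (c) $\Rightarrow$ (b), and the whole point of the setup is that everything has been arranged so that the converse half of Theorem~\ref{complete} applies. Indeed $G(R)$ is Cohen--Macaulay, $I$ is a complete intersection with $\dim R/I=0$, and hypothesis (ii) of that theorem is exactly condition~(c) (the case $k=1$ being vacuous, so infinitely many $k$ forces infinitely many $k>1$). Thus $I^*$ will be forced to be a complete intersection the moment we verify the one remaining hypothesis (iii), that $I^*$ is \emph{liftable}. In other words, the corollary reduces entirely to the single assertion that, under either of the two extra hypotheses, the form ideal $I^*$ is liftable.

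I would establish liftability in the two cases separately. If $I^*$ is a monomial ideal, I would use polarization: replacing each power $x_i^{a}$ occurring in a generator by a product $x_{i,1}\cdots x_{i,a}$ of new variables turns $I^*$ into a squarefree monomial ideal $\tilde J$ in $G(R)[x_{i,j}]$, which is radical and defines a reduced union of coordinate subspaces. Each such component is cut out by a subset of the variables, so at its generic point $\tilde J$ localizes to an ideal generated by a regular sequence of linear forms; hence $\tilde J$ is generically a complete intersection. Taking the lifting variables to be the differences $t_{i,j}=x_{i,j}-x_{i,1}$, these form a regular sequence on the quotient (the standard flatness of polarization), they have positive degree, and setting them to zero recovers $I^*$; so all three conditions (i)--(iii) in the definition of liftable hold, with no restriction on $K$. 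In the two-dimensional case, $I^*$ is a grade-$2$ perfect graded ideal of $K[x,y]$, so by the Hilbert--Burch theorem it is the ideal of maximal minors of an $n\times(n-1)$ homogeneous matrix $\varphi$. I would lift $\varphi$ to a matrix $\tilde\varphi$ over $K[x,y][t_1,\ldots,t_r]$ by a suitably generic perturbation in the new variables, arranging that the maximal-minors ideal $\tilde J=I_{n-1}(\tilde\varphi)$ remains of grade $2$, becomes generically a complete intersection, admits the $t_i$ as a regular sequence on its quotient, and specializes to $I^*$ modulo $(t_1,\ldots,t_r)$.

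The main obstacle is precisely this liftability statement, and within it the two-dimensional case. The monomial case is the easy half: the geometric picture of a reduced union of coordinate subspaces makes all three defining properties of a lifting transparent. The delicate point is controlling the perturbation of the Hilbert--Burch matrix: one must choose it generically enough that the ideal of maximal minors keeps the expected grade $2$ (so that Hilbert--Burch still governs it) and acquires reduced generic components, while remaining flat over the base in the $t$-variables so that it genuinely specializes back to $I^*$. It is this simultaneous genericity-and-flatness control that will require the real work, and it is the reason the corollary is stated only under the two extra hypotheses rather than for arbitrary complete intersections.
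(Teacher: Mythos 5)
Your overall skeleton coincides with the paper's proof: both implications (b) $\Rightarrow$ (a) and (c) $\Rightarrow$ (b) are fed into Theorem~\ref{complete}, (a) $\Rightarrow$ (c) is trivial, and the whole corollary is reduced to proving that $I^*$ is liftable. Your monomial case is also the paper's (polarization, \cite[Lemma 4.2.16]{BH}), and your justification of it is correct. The genuine gap is the two-dimensional case, which you yourself flag as unfinished: lifting the Hilbert--Burch matrix $\varphi$ of $I^*$ by a ``suitably generic perturbation'' $\tilde\varphi$, and then conceding that the required ``simultaneous genericity-and-flatness control'' is the real work still to be done, is a plan rather than a proof. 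Since in the two-dimensional case the implication (c) $\Rightarrow$ (b) rests entirely on this liftability, the proposal does not establish the corollary there.

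The paper closes this step with no perturbation at all: one specializes \emph{from the generic matrix}, and then nothing needs to be ``arranged''. By Hilbert--Burch \cite[Theorem 1.4.17]{BH}, $I^*=I_{n-1}(\varphi)$ where $\varphi$ is an $n\times(n-1)$ matrix whose entries are homogeneous of positive degree (take a minimal resolution, so the entries lie in the graded maximal ideal). Let $X=(X_{ij})$ be an $n\times(n-1)$ matrix of indeterminates with $\deg X_{ij}=\deg\varphi_{ij}$, put $\tilde J=I_{n-1}(X)\subset G(R)[X_{ij}]$ and $t_{ij}=X_{ij}-\varphi_{ij}$, so that $G(R)[X_{ij}]=G(R)[t_{ij}]$ and $\tilde J$ maps onto $I^*$ modulo $(t_{ij})$. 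Condition (i) of liftability holds because $G(R)[X_{ij}]/\tilde J\cong K[x,y]\otimes_K K[X_{ij}]/I_{n-1}(X)$ is a domain, and at the generic point of $V(\tilde J)$ some $(n-2)$-minor of $X$ is invertible, so row and column operations reduce $X$ to a $2\times 1$ matrix and $\tilde J$ to an ideal generated by two elements of grade $2$, i.e.\ a complete intersection. Condition (ii) --- exactly the ``flatness'' you were worried about --- is automatic: $\tilde J$ is a determinantal ideal of expected grade $2$, hence perfect, so $B=G(R)[X_{ij}]/\tilde J$ is Cohen--Macaulay of dimension $r=n(n-1)$; since $B/(t_{ij})\cong G(R)/I^*$ has dimension $0$, the $r$ homogeneous elements $t_{ij}$ of positive degree form a homogeneous system of parameters of the Cohen--Macaulay graded ring $B$ and therefore a regular sequence. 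Condition (iii) is the isomorphism just noted. In short: genericity is needed only for condition (i) and is obtained by using the actual generic matrix, while the regular-sequence condition comes for free from perfection plus preservation of grade; the ``genericity-and-flatness control'' that you identified as the main obstacle is precisely what this standard construction makes a non-issue.
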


\begin{proof} Under the given assumptions the ideal $I^*$  is liftable. Indeed, if $I^*$ is a monomial ideal, then one applies polarization, see \cite[Lemma 4.2.16]{BH}, and if $\dim R=2$, then $I^*$   is perfect of codimension 2. Hence if $I^*$ is generated by $m$ elements, the Hilbert--Burch theorem \cite[Theorem 1.4.17]{BH} implies that $I^*$ is the specialization  of the ideal of maximal minors of an $m\times(m+1)$-matrix of indeterminates. This ideal is generically a complete intersection.

Now we see that Theorem~\ref{complete} yields the implications (c)\implies (b) and (b)\implies (a). The implication (a)\implies (c) is trivial.
\end{proof}

It would be interesting to know whether for a complete intersection  the  conditions (a), (b) and (c) in Corollary~\ref{application} are equivalent without the assumption that $I^*$ is liftable.

\medskip
The following simple example shows that even for a complete intersection the algebra $A(I)$  need not to be finitely generated.

\begin{Example}
\label{infinite}
{\em Let $K$ be a field and consider the ideals $I=(x^2,y^3-xy)\subset S=K[[x,y]]$. We claim that
\begin{eqnarray}
\label{power}
(I^k)^*=((xy,x^2)^k, \{x^iy^{4k-3i+1}\}_{i=0,\ldots,k-1}).
\end{eqnarray}
The claim implies that $y^{4k+1}$ is a minimal generator of $(I^k)^*$. It follows that for each $k$, the element $y^{4k+1}\in (I^k)^*$ is a minimal generator of degree $k$ of the form algebra $A=\Dirsum_{k\geq 0}(I^k)^*$ of $I$. It particular, we see that $A$ is not finitely generated.

We prove (\ref{power}) by induction of $k$, and  set $f=x^2$ and $g=y^3-xy$. In order to prove  (\ref{power})for $k=1$  we first notice that $y^5\in I$. Indeed, we have
$y^5=(y^3+x)g+yf$.
It follows that $(x^2,xy,y^5)\subset I^*$. Applying the Buchberger criterion we see immediately that $x^2,xy-y^3,y^5$ is a Groebner basis of $I$ with respect to the lexicographical order. Hence  $\ini(I)=(x^2,xy,y^5)$ is the initial ideal of $I$ with respect to this monomial order. Therefore $\lambda(S/(x^2,xy,y^5))=\lambda(S/I)$. On the other hand we have $\lambda(S/I^*)=\lambda(S/I)$. This implies that $(x^2,xy,y^5)= I^*$ and proves the claim for $k=1$. It also shows that $\lambda(S/I)=6$.

Now let $k>1$ and assume that (\ref{power}) holds for all $j<k$. Then
\begin{eqnarray}
\label{product}
I^*(I^{k-1})^*+(y^{4k+1})=((xy,x^2)^k, \{x^iy^{4k-3i+1}\}_{i=0,\ldots,k-1})\subset (I^k)^*.
\end{eqnarray}
Thus it remains to be shown that
\begin{eqnarray}
\label{res}
(I^k)^*=I^*(I^{k-1})^*+(y^{4k+1}).
\end{eqnarray}
Since $I$ is generated by a regular sequence it follows that all the modules $I^j/I^{j+1}$ are free $S/I$-modules of rank $j+1$. From this we deduce that
\[
\lambda(S/(I^k)^*)=\lambda(S/(I^k))=\lambda(S/I){k+1\choose 2}=6{k+1\choose 2}.
\]
Now we compute the length of $S/I^*(I^{k-1})^*+(y^{4k+1})$. In view of formula (\ref{product}) we see that $S/I^*(I^{k-1})^*$ has the following monomial $K$-basis: $C\union \Union_{i=0}^{k-1}B_i$ where
\[
C=\{x^k(x^{i}y^j)\}_{i+j\leq k-1}, \quad  \text{and}\quad
B_i=\{x^iy^j\}_{j\leq 4k-3i}.
\]
Counting the number of elements of this basis we see that
\begin{eqnarray*}
\lambda(S/I^*(I^{k-1})^*)+(y^{4k+1}))&=&|C|+\sum_{i=0}^{k-1}|B_i|\\
&=&{k+1\choose 2}+\sum_{i=0}^{k-1}(4k-3i+1)=6{k+1\choose 2}.
\end{eqnarray*}
Thus
\begin{eqnarray}
\label{equal}
\lambda(S/I^*(I^{k-1})^*+(y^{4k+1}))=\lambda(S/(I^k)^*),
\end{eqnarray}
and hence it suffices to show that $y^{4k+1}\in I^k$. Indeed, we will show that for $i=0,1,\ldots, 2k$  the monomials $x^{2k-i}y^{2i+1}$ belong to $I^k$. We proceed by induction on $i$. For $i=0$ we have $x^{2k}y= f^ky\in I^k$. Now let $i>0$ and suppose that $x^{2k-j}y^{2j+1}\in I^k$ for $j<i$. Let the integers $a$ and $b$ be defined by the equations
\[
2k-i=2a+r_1,\quad 0\leq r_1\leq 1,\quad \text{and}\quad 2i+1=3b+r_2,\quad 0\leq r_2\leq 2.
\]
Then $4k+1=4a+3b+2r_1+r_2$ which implies that $4a+3b\geq 4k-3$. From this we deduce that $a+b\geq k$.  Therefore $x^{r_1}y^{r_2}(x^2)^a(y^3-xy)^b\in I^k$ and $x^{2k-i}y^{2i+1}-x^{r_1}y^{r_2}(x^2)^a(y^3-xy)^b$ is a linear combination of monomials of the form $x^{2k-j}y^{2j+1}$ with $j<i$. Since by induction hypothesis these monomials belong to $I^k$, we conclude that $x^{2k-i}y^{2i+1}\in I^k$.
}
\end{Example}

We can slightly modify Example~\ref{infinite} to get finite generation of the algebra $A(I)$ depending on the characteristic of the base field.

\begin{Example}
\label{depending}
{\em Let $I=(x^2+y^2, (x+y)y+y^3)\subset K[[x,y]]$. Then the algebra $A(I)= \Dirsum_{k\geq 0}(I^k)^*$ is standard graded if $\chara K\neq 2$, and it is not finitely generated if $\chara K=2$. Indeed, if $\chara K\neq 2$, then the leading forms $f^*=x^2+y^2$ and $g^*=(x+y)y$ of $f=x^2+y^2$ and $g=(x+y)y+y^3$ are prime to each other. Hence Theorem~\ref{complete}
implies that $A(I)$ is standard graded. On the other hand, if $\chara K=2$, then $f^*=(x+y)^2$. Applying the linear automorphism $\varphi \: K[[x,y]]\to K[[x,y]]$ with $\varphi(x)=x+y$ and $\varphi(y)=y$ we see that $\varphi(I)=(x^2,xy+y^3)$. It follows that $A(I)$ is not finitely generated since $A(\varphi(I))$ is not finitely generated.
}
\end{Example}

At present we do not know of a complete intersection ideal $I$ for which $A(I)$ is finitely generated but not standard graded.

\medskip
We now describe the relationship between form ideals of powers and symbolic powers. We fix a field $K$ and consider an ideal $I \subset R=K[[x_1,\ldots,x_n]]$. The following lemma establishes the link between the two concepts.

\begin{Lemma}
\label{phi}
Let $\alpha\: R\to R[[s]]$ be the $K$-algebra homomorphism with $\alpha(x_i)=x_is$ for $i=1,\ldots, n$. We denote by $\alpha(I)$ the ideal in $R[[s]]$ generated by the elements $\alpha(f)$ with $f\in I$, and set  $I^\sharp=\alpha(I)\: s^\infty$.  Then $s$  is a regular element on $A=R[[s]]/I^\sharp$ and $A/(s)\iso R/I^*R$.
\end{Lemma}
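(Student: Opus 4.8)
The claim is that for the ideal $I^\sharp = \alpha(I)\colon s^\infty$ in $R[[s]]$, the element $s$ is regular on $A = R[[s]]/I^\sharp$, and moreover $A/(s) \cong R/I^*R$. The regularity of $s$ is essentially immediate: by construction $I^\sharp$ is saturated with respect to $s$, meaning $I^\sharp \colon s = I^\sharp$. Indeed, if $fs \in I^\sharp = \alpha(I)\colon s^\infty$, then $s^m(fs) \in \alpha(I)$ for some $m$, so $s^{m+1}f \in \alpha(I)$, whence $f \in \alpha(I)\colon s^\infty = I^\sharp$. This says precisely that multiplication by $s$ on $A$ is injective, so the plan for this part is just to unwind the colon-saturation definition.

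**The isomorphism $A/(s) \cong R/I^*R$.** This is the substantive part. The key observation is how the map $\alpha$ interacts with leading forms. Writing any $f \in R$ as $f = f_d + f_{d+1} + \cdots$ where $f_j$ is the homogeneous component of degree $j$ and $d = \ord(f)$, we get $\alpha(f) = s^d(f_d + f_{d+1}s + \cdots)$, so $\alpha(f)/s^d$ is a unit-adjusted element whose reduction modulo $s$ is exactly the leading form $f^* = f_d$. The plan is to make this precise: I would show that $I^\sharp$ consists of those elements of $R[[s]]$ that, after factoring out the appropriate power of $s$, have leading forms landing in $I^*$. Concretely, I expect to prove that the composite $R[[s]] \to R[[s]]/(s) \cong R \to R/I^*R$ carries $I^\sharp$ onto $I^*R$ and has kernel containing $I^\sharp + (s)$; combined with a length/dimension count this yields the isomorphism.

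**The main steps and the obstacle.** First I would establish the inclusion showing $I^\sharp + (s) \supseteq$ (preimage of $I^*$), using that for $f \in I$ with $\ord(f) = d$ the element $g := \alpha(f)/s^d \in \alpha(I)\colon s^\infty = I^\sharp$, and $g \equiv f^* \pmod{s}$; since the $f^*$ generate $I^*$, this produces all generators of $I^*R$ as images of elements of $I^\sharp$. The reverse inclusion, showing that the image of $I^\sharp$ modulo $s$ lands \emph{inside} $I^*R$ and not something larger, is where I expect the real difficulty. The danger is that saturating $\alpha(I)$ by $s^\infty$ could enlarge the ideal enough that its reduction mod $s$ strictly contains $I^*R$. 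To control this I would argue that any $h \in I^\sharp$ satisfies $s^N h \in \alpha(I)$ for some $N$, write $h = \sum_j h_j s^j$ and track the lowest $s$-degree term; expanding $s^N h \in \alpha(I)$ in powers of $s$ and comparing homogeneous pieces should force the leading behavior of $h$ to be governed by leading forms of elements of $I$, hence land in $I^*$.

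**Completing the argument.** Once both inclusions give a surjection $A/(s) \twoheadrightarrow R/I^*R$, I would verify injectivity. The cleanest route is via the grading/filtration: the associated graded ring of $R$ with respect to $\mm$ is $G(R)$, and the leading-form construction identifies $G(R)/I^* \cong G(R/I)$. I would match the Hilbert function of $A/(s)$ against that of $R/I^*R$, or equivalently argue directly that the surjection is an isomorphism because both sides have the same graded structure induced by the $s$-adic valuation on $R[[s]]$. The role of $s$ here is exactly that of a homogenizing variable recording the order filtration, so the statement $A/(s) \cong R/I^*R$ is the algebraic incarnation of the fact that degenerating via $s \to 0$ replaces each $f$ by its leading form $f^*$; making this degeneration rigorous through the saturation $\alpha(I)\colon s^\infty$ is the heart of the proof.
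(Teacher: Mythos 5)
Your treatment of the regularity of $s$ is exactly the paper's argument and is fine, as is the easy inclusion: $f^\sharp := s^{-d}\alpha(f)\in I^\sharp$ reduces to $f^*$ modulo $s$, so the image of $I^\sharp$ in $R\iso R[[s]]/(s)$ contains $I^*R$. (Note also that once the reverse inclusion is established, the isomorphism $A/(s)\iso R/I^*R$ follows immediately from $A/(s)\iso R/\pi(I^\sharp)$, where $\pi$ is reduction modulo $s$; your proposed final step of matching Hilbert functions or lengths is superfluous, and it is unclear how it could be carried out anyway, since $I$ is arbitrary, $R/I^*R$ need not have finite length, and $A/(s)$ comes with no given grading.)

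The genuine gap is the reverse inclusion $\pi(I^\sharp)\subset I^*R$, which you defer to ``expanding $s^Nh\in\alpha(I)$ in powers of $s$ and comparing homogeneous pieces.'' As stated this fails, because of cancellation and because higher homogeneous components of elements of $I$ intervene. Concretely, if $s^Nh=\sum_i g_i\alpha(l_i)$ with $l_i\in I$ and $g_i=\sum_j g_i^{(j)}s^j$, then comparing coefficients of $s^N$ gives $h(x,0)=\sum_i\sum_{j+m=N}g_i^{(j)}(l_i)_m$, where $(l_i)_m$ is the degree-$m$ homogeneous component of $l_i$; for $m>\ord(l_i)$ these components are not leading forms and need not lie in $I^*R$, so nothing forces $h(x,0)$ into $I^*R$ by this bookkeeping alone. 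The paper's proof supplies the missing structural idea: endow $R[[s]]$ with the $\ZZ$-grading in which $\deg s=-1$ and $\deg x_i=1$. Then every $\alpha(f)$ is homogeneous of degree $0$; a homogeneous element of degree $e$ of $R[[s]]$ is precisely of the form $s^{-e}\alpha(h)$ for some $h\in R$; and membership in the homogeneously generated ideal $\alpha(I)$ can be tested on homogeneous components (a point that itself needs an argument for formal power series, which the paper gives). This reduces everything to homogeneous $f\in I^\sharp$, for which the relation $s^kf=\sum_i g_i\alpha(l_i)$ collapses to $s^df=\alpha(g)$ with $g=\sum_i h_il_i\in I$, proving the much stronger statement that $I^\sharp$ is generated by the elements $g^\sharp$ with $g\in I$ --- from which the isomorphism is immediate. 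Without this grading trick (or an equivalent device), your sketch of the key step remains a plausibility argument rather than a proof.
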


\begin{proof}
We first observe that $s$ is a regular element on $A=R[[s]]/I^\sharp$. Indeed, if $sf\in I^\sharp$ for some $f\in  R[[s]]$, then there exists exists an integer $k$  such that $s^{k+1}f=s^k(sf)\in \alpha(I)$. Then $f\in I^\sharp$.

In order to prove the isomorphism  $A/(s)\iso R/I^*R$, we show that $I^\sharp$ is generated by the elements $f^\sharp$ with $f\in I$, where for $f=f_d+f_{d+1}+\cdots  \in R$ with each $f_i$ homogeneous of degree $i$ and $f_d\neq 0$, we set $f^\sharp=f_d+sf_{d+1}+\cdots +s^{i-d}f_i+\cdots$. Observe that $f^\sharp=s^{-d}\alpha(f)$,  where  $d$ is the initial  degree of $f$. This shows that $f^\sharp\in I^\sharp$ for all $f\in I$.

Conversely, let $f\in I^\sharp$. Then there exists an integer $k$ such that $s^kf\in \alpha(I)$. Assigning to $s$ the degree $-1$ and to each $x_i$ the degree $1$, we see that the generators of $\alpha(I)$ are homogeneous of degree $0$. Here we call a power series $h\in R[[s]]$ homogeneous, if all monomials in the support of $h$ are of same degree. Let $h\in R[[s]]$ be a power series, and $i\in \ZZ$. We let $h_i$ be the sum of those  terms in $h$ whose degree is $i$. Then $h$ is the formal sum of the $h_i$  and each $h_i$ is homogeneous  of degree $i$. We call $h_i$ the $i$th homogeneous component of $h$. The expression $h=\sum_ih_i$ makes sense, because the monomial support of $h_i$ and $h_j$ is disjoint for $i\neq j$. Suppose now that $g$ is homogeneous of degree $j$. Then $(hg)_i =h_{i-j}g$ for all $j$. Thus if $J\subset R[[s]]$ is an ideal generated by homogeneous elements $g_1,\ldots, g_r$ of degree $j_1,\ldots, j_r$, respectively, then $h$ belongs to $J$ if
and only if all its homogeneous components belong to $J$, as is the case for a positively graded algebra. Indeed, if $h=\sum_{i=1}^ra_ig_i$, then the $i$th homogeneous component of $h$ is $\sum_{i=1}^r(a_i)_{i-j_i}g_i$, and thus it belongs to $J$.

Hence, since $\alpha(I)$ is generated by homogeneous elements,  we may assume $s^kf$ is homogeneous. In particular  $f$ is homogeneous,  say $\deg f=d$. Hence  there exist $l_1,\ldots, l_r\in I$ such that  $s^kf=\sum_{i=1}^rg_i\alpha(l_i)$ with each $g_i\in R[[s]]$  homogeneous of degree $d_i$ and such that $d=d_i+k$  for $i=1,\ldots,r$.

Since $g_i$ is homogeneous, it is of the form $\sum_{j\geq 0}g_{ij}s^j$ where for all $j$, $g_{ij}$ is a homogeneous polynomial in the variables $x_1,\ldots, x_n$ of degree $d_i+j$. Let $h_i=\sum_{j\geq 0}g_{ij}$. Then there exist integers $k_i$ such that $g_i=s^{k_i}\alpha(h_i)$ for $i=1,\ldots,r$. We have $k_i=-d_i$ for all $i$, since $\deg \alpha(h_i)=0$. It follows that
\[
s^kf=\sum_{i=1}^rs^{-d_i}\alpha(b_i) \quad \text{with}\quad b_i=h_il_i\in I.
\]
Write $f=\sum_lf_ls^l$ where each $f_l$ is a homogeneous polynomial of degree $d-l$  in the variables $x_1,\ldots, x_n$, and  write each $b_i=\sum_jb_{ij}$, where $b_{ij}$ is a homogenous polynomial of degree $j$  in the variables $x_1,\ldots, x_n$. Then we get
\[
s^kf=\sum_lf_ls ^{l+k}=\sum_{i=1}^r(\sum_jb_{ij}s^{j-d_i})=\sum_{i=1}^r(\sum_jb_{ij}s^{j+k-d})=
\sum_j(\sum_{i=1}^rb_{ij})s^{j+k-d}.
\]
Comparing coefficients we see that $f_l=\sum_{i=1}^rb_{i,l+d}$ for all $l$. This shows that $\sum_lf_l=\sum_{i=1}^rb_i\in I$. Thus if we set $g=\sum_{i=1}^rb_i$, then $g\in I$ and   $f=s^mg^\sharp$ for some nonnegative  integer $m$, as desired.
\end{proof}

We set  $J=(s)$. Then  we get

\begin{Proposition}
\label{comparison}
The following conditions are equivalent:
\begin{enumerate}
\item[{\em (a)}] $S_J(I^\sharp)=\Dirsum_{k\geq 0}(I^\sharp)^k:s^\infty$ is finitely generated (resp.\ standard graded).
\item[{\em (b)}] $A(I)=\Dirsum_{k\geq 0}(I^k)^*$ is finitely generated (resp.\ standard graded).
\end{enumerate}
\end{Proposition}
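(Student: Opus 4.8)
The plan is to show that finite generation and standard gradedness of $S_J(I^\sharp)$ and of $A(I)$ are both governed by the same family of ideal identities, which I can transport across the reduction modulo $s$ furnished by Lemma~\ref{phi}. Concretely, I would use the Veronese criterion \cite[Theorem 2.1]{HHT} (already invoked for Theorem~\ref{strengthen}): $S_J(I^\sharp)$ is finitely generated iff $S_J(I^\sharp)^{(d)}$ is standard graded for some $d>0$, and $A(I)$ is finitely generated iff $A(I)^{(d)}$ is standard graded for some $d>0$; the parenthetical ``standard graded'' assertions are the instance $d=1$. Everything therefore comes down to proving, for all $d,k>0$, the single equivalence
\[
((I^d)^\sharp)^k=(I^{kd})^\sharp \quad\iff\quad ((I^d)^*)^k=(I^{kd})^*.
\]

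First I would identify the graded pieces of $S_J(I^\sharp)$. Since $\alpha$ is a ring homomorphism, $\alpha(I^m)=\alpha(I)^m$; together with $\alpha(I)\subseteq I^\sharp=\alpha(I):s^\infty$ this gives $\alpha(I^m)\subseteq(I^\sharp)^m\subseteq\alpha(I^m):s^\infty$, because any product of $m$ elements of $\alpha(I):s^\infty$ lands in $\alpha(I)^m=\alpha(I^m)$ after multiplication by a power of $s$. Saturating and using Lemma~\ref{phi} for $I^m$ yields $S_J(I^\sharp)_m=(I^\sharp)^m:s^\infty=\alpha(I^m):s^\infty=(I^m)^\sharp$. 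In particular $S_J(I^\sharp)_1=I^\sharp$, $S_J(I^\sharp)_{kd}=(I^{kd})^\sharp$ and $(S_J(I^\sharp)_d)^k=((I^d)^\sharp)^k$, so that $S_J(I^\sharp)^{(d)}$ is standard graded precisely when $((I^d)^\sharp)^k=(I^{kd})^\sharp$ for all $k$; the corresponding translation for $A(I)$, with $A(I)_1=I^*$, is immediate.

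For the displayed equivalence I would reduce modulo $s$. Put $\mathfrak b=((I^d)^\sharp)^k\subseteq\mathfrak c=(I^{kd})^\sharp$ in $R[[s]]$. By Lemma~\ref{phi} the surjection $R[[s]]\to R[[s]]/(s)=R$ sends $(I^m)^\sharp$ onto $(I^m)^*R$, so $\mathfrak b$ maps onto $((I^d)^*)^kR$ and $\mathfrak c$ onto $(I^{kd})^*R$; hence $\mathfrak b+sR[[s]]=\mathfrak c+sR[[s]]$ is equivalent to $((I^d)^*)^kR=(I^{kd})^*R$. The point is that this reduction detects the inclusion $\mathfrak b\subseteq\mathfrak c$ exactly: Lemma~\ref{phi} also tells us $s$ is a nonzerodivisor on $R[[s]]/\mathfrak c$, so $\mathfrak c\cap sR[[s]]=s\mathfrak c$, and consequently $\mathfrak c=\mathfrak b+s\mathfrak c$ whenever $\mathfrak b+sR[[s]]=\mathfrak c+sR[[s]]$. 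Iterating gives $\mathfrak c=\mathfrak b+s^N\mathfrak c$ for every $N$, and since $R[[s]]=K[[x_1,\dots,x_n,s]]$ is Noetherian local with $s$ in its maximal ideal, Krull's intersection theorem applied to $\mathfrak c/\mathfrak b$ gives $\bigcap_N(\mathfrak b+s^N\mathfrak c)=\mathfrak b$, whence $\mathfrak b=\mathfrak c$. Thus $((I^d)^\sharp)^k=(I^{kd})^\sharp$ iff $((I^d)^*)^kR=(I^{kd})^*R$. Finally, as $((I^d)^*)^k$ and $(I^{kd})^*$ are graded ideals of $G(R)=K[x_1,\dots,x_n]$ and $\mathfrak a R\cap G(R)=\mathfrak a$ for every graded ideal $\mathfrak a$ (its extension to $G(R)_{\mm}$ contracts back by a lowest-degree-unit argument, and $G(R)_{\mm}\to R$ is faithfully flat), the equality over $R$ descends to $((I^d)^*)^k=(I^{kd})^*$ over $G(R)$, completing the equivalence.

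I expect the main obstacle to be precisely the implication $((I^d)^*)^kR=(I^{kd})^*R\Rightarrow((I^d)^\sharp)^k=(I^{kd})^\sharp$, that is, showing the inclusion $\mathfrak b\subseteq\mathfrak c$ can be checked after killing $s$. This is where Lemma~\ref{phi} is used essentially twice---to compute the images modulo $s$ and to supply that $s$ is a nonzerodivisor on $R[[s]]/\mathfrak c$ (hence $\mathfrak c\cap sR[[s]]=s\mathfrak c$)---after which Krull's intersection theorem in the Noetherian local ring $R[[s]]$ forces $\mathfrak b=\mathfrak c$. The remaining ingredients, namely the Veronese reduction, the identity $\alpha(I^m)=\alpha(I)^m$, and the descent of graded ideals along $G(R)\to R$, are routine.
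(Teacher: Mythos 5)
Your proof is correct and follows essentially the same route as the paper: you identify the graded pieces via $(I^\sharp)^k\colon s^\infty=\alpha(I^k)\colon s^\infty=(I^k)^\sharp$, invoke the Veronese criterion of \cite{HHT}, and then transfer the resulting ideal identities across reduction modulo $s$, using that $s$ is regular on $R[[s]]/(I^m)^\sharp$ (Lemma~\ref{phi}) together with a Nakayama argument. The only cosmetic differences are that the paper packages the mod-$s$ comparison as an exact-sequence argument (via \cite[Proposition 1.1.4]{BH}) rather than your modular-law computation $\mathfrak{c}=\mathfrak{b}+s\mathfrak{c}$, and that you spell out the descent of graded ideals from $R$ back to $G(R)$, a point the paper leaves implicit.
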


\begin{proof}    We first notice that
\begin{eqnarray}
\label{equal1}
(I^\sharp)^k\: s^\infty=(\alpha(I)\: s^\infty)^k\: s^\infty =\alpha(I)^k\: s^\infty= \alpha(I^k)\: s^\infty.
\end{eqnarray}
An argument as in the proof of Corollary~\ref{strengthen} shows the second equation in (\ref{equal1}).

Set $J_k=(I^\sharp)^k:s^\infty$ and $I_k=(I^k)^*R$. Then $S_J(I^\sharp)$ is finitely generated if and only if for some integer $d>0$ one has $(J_k)^d=J_{dk}$ for all $k$, and a corresponding statement holds  for $A(I)$, see for example \cite[Theorem]{HHT} or \cite{R}.

For an $R[[s]]$-module $M$, we set $\overline{M}=M/sM$. Then since $s$ is regular on $R[[s]]/J_{dk}$, the exact sequence
\[
0\to J_{dk}/(J_k)^d\to R[[s]]/(J_k)^d\to R[[s]]/J_{dk}\to 0
\]
induces the exact sequence
\[
0\to \overline{J_{dk}/(J_k)^d}\to R/(I_k)^d\to R/I_{dk}\to 0,
\]
see \cite[Proposition 1.1.4]{BH}.  Therefore,  $\overline{J_{dk}/(J_k)^d}=I_{dk}/(I_k)^d$, and hence Nakayama's lemma implies that $(J_k)^d=J_{dk}$ if and only if $(I_k)^d=I_{dk}$, which is the case if and only if $((I^k)^*)^d=(I^{dk})^*$. This shows that $S_J(I)$ is finitely generated if and only if $A(I)$ is finitely generated. In the same way one shows that $S_J(I)$ is standard graded if and only if $A(I)$ is standard graded.
\end{proof}

\begin{Example}
\label{new}
{\em Let $J=(x^2,xy-sy^3,y^5)\subset K[[x,y,s]]$. Then $J=I^\sharp$ for the ideal $I=(x^2,xy-y^3)$ in Example~\ref{infinite}. The ideal $J$ is a Cohen--Macaulay ideal of codimension 2 and has the relation matrix
\[
\begin{pmatrix}
y & -x+sy^2 & s^2\\
0 & -y^4 & x-sy^2
\end{pmatrix}
\]
Since $A(I)$ is not finitely generated,
Proposition~\ref{comparison} tells us that $\Dirsum_k J^k\:s^\infty$ is not finitely generated as well.

Since $(J,s)$ is $(x,y,s)$-primary it follows that $J^k\:s^\infty=\widetilde{J^k}$.  Computations with CoCoA suggests that this limit exists and  is equal to  $1/3$.
}
\end{Example}

\end{document}